\def\@cite#1#2{[{{\bfseries #1}\if@tempswa , #2\fi}]}
\renewcommand{\section}{%
\@startsection{section}{1}{\z@}
{0.5truecm plus -1ex minus -.2ex}%
{1.0ex plus .2ex}{\bfseries\large}}
\def\@seccntformat#1{\csname the#1\endcsname.\ }
\numberwithin{equation}{section} 
\newtheorem{thm}{Theorem}[section]
\newtheorem{lem}[thm]{Lemma}
\theoremstyle{definition}
\newtheorem{df}{Definition}[section]
\newtheorem{remark}{Remark}[section]
\newtheorem*{ex1}{Example 6.1 (porous media and Cahn--Hilliard type equations)}
\newtheorem*{ex2}{Example 6.2 (fast diffusion and Cahn--Hilliard type equations)}
\newtheorem*{prth1.1}{Proof of Theorem 1.1}
\newtheorem*{prth1.2}{Proof of Theorem 1.2}
\newtheorem*{prth1.3}{Proof of Theorem 1.3}
\newtheorem*{prth5.1}{Proof of Theorem 5.1}
\newcommand{\ep}{\varepsilon}
\newcommand{\pa}{\partial}
\newcommand{\RN}{\mathbb{R}^N}
\begin{document}
\footnote[0]
    {2010{\it Mathematics Subject Classification}\/. 
    Primary: 35K59, 35K35; Secondary: 47H05.
    }
\footnote[0]
    {{\it Key words and phrases}\/: 
    quasilinear parabolic equations; porous media type equations; 
    Cahn--Hilliard type systems; 
    subdifferential operators.
    }
\begin{center}
    \Large{{\bf A direct approach to quasilinear parabolic equations 
\\
on unbounded domains by Br\'ezis's theory for 
\\
subdifferential operators
           }}
\end{center}
\vspace{5pt}
\begin{center}
    Shunsuke Kurima\\
    \vspace{2pt}
    Department of Mathematics, 
    Tokyo University of Science\\
    1-3, Kagurazaka, Shinjuku-ku, Tokyo 162-8601, Japan\\
    {\tt shunsuke.kurima@gmail.com}\\
    \vspace{12pt}
    Tomomi Yokota%
   \footnote{Corresponding author}%
   \footnote{Partially supported by Grant-in-Aid for
    Scientific Research (C), No.\,16K05182.}\\
    \vspace{2pt}
    Department of Mathematics, 
    Tokyo University of Science\\
    1-3, Kagurazaka, Shinjuku-ku, Tokyo 162-8601, Japan\\
    {\tt yokota@rs.kagu.tus.ac.jp}\\
    \vspace{2pt}
\end{center}
\begin{center}    
    \small \today
\end{center}

\vspace{2pt}
\newenvironment{summary}
{\vspace{.5\baselineskip}\begin{list}{}{%
     \setlength{\baselineskip}{0.85\baselineskip}
     \setlength{\topsep}{0pt}
     \setlength{\leftmargin}{12mm}
     \setlength{\rightmargin}{12mm}
     \setlength{\listparindent}{0mm}
     \setlength{\itemindent}{\listparindent}
     \setlength{\parsep}{0pt}
     \item\relax}}{\end{list}\vspace{.5\baselineskip}}
\begin{summary}
{\footnotesize {\bf Abstract.} 
    This paper is concerned with existence and uniqueness 
    of solutions to two kinds of quasilinear parabolic equations. 
    One is described as the following form which includes 
    the porous media and fast diffusion type equations:  
    \begin{equation}
        \frac{\partial u}{\partial t} + (-\Delta+1)\beta(u) 
        = g \quad \mbox{in}\ \Omega\times(0, T) \tag*{(E)}\label{E}
    \end{equation}
    and the other is the Cahn--Hilliard type system summarized as
    \begin{equation}
    \frac{\partial u_{\ep}}{\partial t} 
        + (-\Delta+1)(\ep(-\Delta+1)u_{\ep} + \beta(u_{\ep}) + \pi_{\ep}(u_{\ep}))
        = g \quad \mbox{in}\ \Omega\times(0, T), \tag*{(E)$_{\ep}$}\label{Eep}
    \end{equation}
    where $\Omega\subset\RN$ is an unbounded domain 
    with smooth bounded boundary, $N\in{\mathbb N}$, $T>0$, 
    $\beta$ is a single-valued maximal monotone function on 
    $\mathbb{R}$, e.g., $\beta(r) = |r|^{q-1}r + r\ (q >0)$  
    and $\pi_{\ep}$ is an anti-monotone function on $\mathbb{R}$, 
    e.g., $\pi_{\ep}(r) = -\ep r\ (\ep>0)$.    
    In the case that $N = 2, 3$, $\Omega$ is {\it bounded} and 
    $-\Delta + 1$ is replaced with $-\Delta$,     
    existence of solutions to \ref{E} was already proved 
    by Br\'ezis's theory for subdifferential operators. 
    On the other hand, it is known that existence of solutions 
    to \ref{Eep} is obtained from an approach via its approximate 
    problem whose solvability is proved by applying 
    an abstract theory for doubly nonlinear evolution inclusions; 
    however, the proof is based on compactness methods 
    and hence the case of unbounded domains is excluded 
    from the framework. The present paper applies Br\'ezis theory 
    directly to both \ref{E} and \ref{Eep} and gives 
    existence results for these two equations even if 
    $\Omega$ is {\it unbounded}. Moreover, an error estimate 
    between \ref{E} and \ref{Eep} as in Colli and Fukao \cite{CF-2016} 
    is also proved via apriori estimates obtained directly. } 
\end{summary}
\newpage

\section{Introduction} \label{Sec1}
\subsection{Two problems}

We consider applications of 
Br\'ezis's theory for subdifferential operators proposed in \cite{Brezis}
to quasilinear parabolic equations on {\it unbounded} domains.
In \cite[Theorem 3.6]{Brezis} it is explained that
there exists a unique solution of the following Cauchy problem 
for abstract evolution equations:
\[
\begin{cases}
u'(t) + \partial\psi(u(t)) \ni \tilde{f}(t) 
         \quad \mbox{in}\ X \quad \mbox{for a.a.}\ t\in(0, T),
         \\[2mm]
         u(0) = u_0 \quad \mbox{in}\ X,
\end{cases}
\]
where $X$ is a Hilbert space, $\partial\psi$ is a subdifferential 
operator of a proper lower semicontinuous convex function $\psi$, 
$u: [0, T] \to X$ is an unknown function and $\tilde{f} \in L^2(0, T; X)$ is a given function. 
The theory is often applied to problems on {\it bounded} domains 
(see some examples given in \cite{Brezis}). 
The theme of this paper is to apply the theory in 
\cite{Brezis} {\it directly} to two 
quasilinear parabolic partial differential equations 
on {\it unbounded} domains.

%
%

The first purpose is that we apply the above Br\'ezis's theory to show 
existence and uniqueness of solutions to the following problem: 
 \begin{equation*}\tag*{(P)}\label{P}
     \begin{cases}
         \dfrac{\partial u}{\partial t}+(-\Delta+1)\beta(u) = g  
         & \mbox{in}\ \Omega\times(0, T),
 \\[3mm]
         \partial_{\nu}\beta(u) = 0                                   
         & \mbox{on}\ \partial\Omega\times(0, T),
 \\[3mm]
        u(0) = u_0                                         
         & \mbox{in}\ \Omega, 
     \end{cases}
 \end{equation*}
where $\Omega$ is an {\it unbounded} domain 
in $\RN$ with smooth bounded boundary $\partial\Omega$, 
$N \in{\mathbb N}$, $T>0$, $g, u_{0}$ is given functions,  
and
$\pa_\nu$ denotes differentiation with respect to
the outward normal of $\pa\Omega$. 
If $N = 2, 3$, $\Omega$ is bounded and $-\Delta+1$ is replaced with $-\Delta$, then 
\ref{P} represents the porous media equation 
(see, e.g., \cite{ASS-2016, M-2010, V-2007, Y-2008}), 
the Stefan problem 
(see, e.g., \cite{BP-2005, D-1977, Fri-1968, F-2016, HK-1991}), 
the fast diffusion equation 
(see, e.g., \cite{B-1983, RV-2002, V-2007}), etc. 
In this case, existence and uniqueness of solutions to these problems can be proved by a direct application of \cite{Brezis}. However, since the proof of 
the existence depends on boundedness of $\Omega$, there seems to be no work on the problem on unbounded domains via \cite{Brezis}. In this paper we mainly study the case 
such as $\beta (u)= |u|^{q-1}u + u$  $(q>1)$.

%
%

The second purpose is to show that the theory in \cite{Brezis} is 
directly applicable to 
the following problem for the Cahn--Hilliard type system:
\begin{equation*}\tag*{(P)$_{\ep}$}\label{Pep}
\begin{cases}
\dfrac{\partial u_{\ep}}{\partial t} 
+ (-\Delta+1)\mu_{\ep} = 0 
& \mbox{in}\ \Omega\times(0, T),
\\[3mm] 
\mu_{\ep}=\ep(-\Delta+1)u_{\ep} 
+ \beta(u_{\ep})+\pi_{\ep}(u_{\ep})-f 
& \mbox{in}\ \Omega\times(0, T), 
\\[3mm]
\partial_{\nu}{\mu_{\ep}} = \partial_{\nu}{u_{\ep}} = 0 
& \mbox{on}\ \partial\Omega\times(0, T),
\\[3mm]
u_{\ep}(0) = u_{0\ep} 
& \mbox{in}\ \Omega,
\end{cases}
\end{equation*}
where $\pi_{\ep}$ is an anti-monotone function 
with 
$\ep>0$,  
$f$ is a function determined by $g$ and  
$u_{0\ep}$ is a given function. 
If $N = 2, 3$, $\Omega$ is bounded and $-\Delta+1$ is 
reduced to $-\Delta$, then \ref{Pep} represents 
the Cahn--Hilliard system (see e.g., \cite{CH-1958, CF-2015, EZ-1986}) 
and is regarded as an approximate problem to \ref{P} 
(see \cite{CF-2016, F-2016}). In particular, 
in the proof of existence of solutions to the problem 
in \cite{CF-2016}, {\it one more approximation} (P)$_{\ep, \lambda}$ of 
\ref{Pep} was essentially required,  
where existence of solutions to (P)$_{\ep, \lambda}$ was 
proved by applying the abstract theory by Colli and Visintin \cite{CV-1990} 
for doubly nonlinear evolution inclusions of the form  
    $$
    Au'(t) + \partial\psi(u(t)) \ni k(t)
    $$
 with some bounded monotone operator $A$ and 
 some proper lower semicontinuous convex function $\psi$. 
 Since the theory is based on compactness methods, 
 boundedness of $\Omega$ is necessary and 
 hence the case of unbounded domains is excluded from 
 their frameworks. 

 The relation between \ref{P} and \ref{Pep} was recently 
 studied by Colli and Fukao \cite{CF-2016} in the case 
 stated above. More precisely, in \cite{CF-2016},  
 existence of weak solutions to \ref{P} and \ref{Pep} with error 
 estimates was established under the condition that $N$ = $2, 3$, 
 $\Omega$ is a bounded domain with smooth boundary 
 and $-\Delta + 1$ is replaced with $-\Delta$ in 
 \ref{P} and \ref{Pep}. 
 In particular, they considered the case of degenerate diffusion and their approach to     degenerate diffusion equations from the Cahn--Hilliard system made a new development. 
  They established the error estimate that 
 the solution of \ref{Pep} converges to solution of \ref{P} 
 in the order $\ep^{1/2}$ as $\ep \searrow 0$. 
 Their proof was also based 
 on one more approximation (P)$_{\ep, \lambda}$,  
 while in this paper we will directly establish an error estimate 
 without using (P)$_{\ep, \lambda}$.

\subsection{Main result for \ref{P}}\label{Sec1.2}

 %
 %
 %

 Before stating the main result for \ref{P}, we give some conditions, 
 notations and definitions. 
 We will assume that $\beta$, $g$, $f$, and $u_{0}$ satisfy the following conditions: 
%
%
%
 \begin{enumerate} 
 \item[(C1)] $\beta : \mathbb{R} \to \mathbb{R}$                                
 is a single-valued maximal monotone function 
 and $\beta(r) = \hat{\beta}\,'(r) = \partial\hat{\beta}(r)$, where 
 $\hat{\beta}\,'$ and $\partial\hat{\beta}$ are the differential and  
 subdifferential 
 of a proper differentiable (lower semicontinuous) convex function 
 $\hat{\beta} : \mathbb{R} \to [0, +\infty]$ 
 satisfying $\hat{\beta}(0) = 0$. 
 This entails $\beta(0) = 0$. 
 There exists a constant $c_{1} > 0$ such that 
   \begin{equation*}
   \hat{\beta}(r) \geq c_{1}|r|^2 
   \quad \mbox{for all}\ r\in\mathbb{R}.
   \end{equation*}
 For all $z \in L^2(\Omega)$, if $\hat{\beta}(z) \in L^1(\Omega)$,  
 then $\beta(z) \in L_{{\rm loc}}^1(\Omega)$. 
 Moreover, 
 for all $z \in L^2(\Omega)$ and 
 for all $\psi \in C_{\mathrm{c}}^{\infty}(\Omega)$, 
 if $\hat{\beta}(z)\in L^1(\Omega)$,  
 then $\hat{\beta}(z + \psi) \in L^1(\Omega)$.
 \item[(C2)] $g\in L^2\bigl(0, T; L^2(\Omega)\bigr)$. 
 Then we fix a solution 
 $f\in L^2\bigl(0, T; H^2(\Omega)\bigr)$ of 
 \begin{equation}\label{a3eq}
     \begin{cases}
         (-\Delta+1)f(t) = g(t) 
         & \mbox{a.e.\ in}\ \Omega,
     \\[2mm] 
         \partial_{\nu}f(t) = 0 
         & \mbox{in the sense of traces on}\ \partial\Omega 
     \end{cases}
 \end{equation}
 for a.a.\ $t\in(0, T)$, that is, 
 \begin{equation}
 \int_{\Omega}\nabla f(t)\cdot\nabla z + 
 \int_{\Omega}f(t)z\ = \int_{\Omega}g(t)z \quad 
 \mbox{for all}\ z\in H^1(\Omega). 
 \end{equation}
 \item[(C3)] $u_0 \in L^2(\Omega)$ and $\hat{\beta}(u_0) \in L^1(\Omega)$.
 \end{enumerate}
 
 We put the Hilbert spaces 
   \begin{equation}\label{HV}
   H:=L^2(\Omega), \quad V:=H^1(\Omega)
   \end{equation}
 with inner products $(\cdot, \cdot)_H$ 
 and $(\cdot, \cdot)_V$, respectively. 
 Moreover, we use 
   \begin{equation}\label{W}
   W:=\bigl\{z\in H^2(\Omega)\ |\ \partial_{\nu}z = 0 \quad 
   \mbox{a.e.\ on}\ \partial\Omega\bigr\}.
   \end{equation}
 The notation $V^{*}$ denotes the dual space of $V$ with 
 duality pairing $\langle\cdot, \cdot\rangle_{V^*, V}$. 
 Moreover, we define a bijective mapping $F : V \to V^{*}$ and 
 the inner product in $V^{*}$ as 
    \begin{align}
    &\langle Fv_{1}, v_{2} \rangle_{V^*, V} := 
    (v_{1}, v_{2})_{V} \quad \mbox{for all}\ v_{1}, v_{2}\in V, 
    \label{defF}
    \\[1mm]
    &(v_{1}^{*}, v_{2}^{*})_{V^{*}} := 
    \left\langle v_{1}^{*}, F^{-1}v_{2}^{*} 
    \right\rangle_{V^*, V} 
    \quad \mbox{for all}\ v_{1}^{*}, v_{2}^{*}\in V^{*};
    \label{innerVstar}
    \end{align}
 note that $F : V \to V^{*}$ is well-defined by 
 the Riesz representation theorem. 
 We remark that (C2) implies 
 \begin{equation}\label{Ffg}
 Ff(t)=g(t) \quad \mbox{for a.a.}\ t\in(0, T).
 \end{equation}
 
 We define weak solutions of \ref{P} as follows.
%
%
%
 \begin{df}         
 A pair $(u, \mu)$ with 
    \begin{align*}
    &u\in H^1(0, T; V^{*})\cap L^{\infty}(0, T; H), 
    \\
    &\mu\in L^2(0, T; V)
    \end{align*}
 is called a {\it weak solution} of \ref{P} 
 if $(u, \mu)$ satisfies 
    \begin{align}
        & \bigl\langle u'(t), z\bigr\rangle_{V^{*}, V} + 
          \bigl(\mu(t), z\bigr)_{V} = 0 \quad 
          \mbox{for all}\ z \in V\ \mbox{and a.a}.\ t\in(0, T), 
          \label{de4}
     \\[3mm]
        & \mu(t) = \beta(u(t)) - f(t) \quad \mbox{in}\ V 
          \quad \mbox{for a.a.}\ t\in(0, T), \label{de5}
     \\[3mm]
        & u(0) = u_{0} \quad \mbox{a.e.\ on}\ \Omega. \label{de6}
     \end{align}
 \end{df}

Now the main result for \ref{P} reads as follows.
 \begin{thm}\label{maintheorem1}
 Assume {\rm (C1)-(C3)}. 
 Then there exists a unique weak solution $(u, \mu)$ of {\rm \ref{P}}, 
 satisfying                                                 
     \begin{equation*}
        u \in H^1(0, T; V^{*})\cap L^{\infty}(0, T; H),
         \quad \mu \in L^2(0, T; V).  
     \end{equation*}
 Moreover, for all $t\in[0, T]$,
     \begin{align}
     &\int_{0}^{t}\bigl|u'(s)\bigr|_{V^{*}}^2\,ds 
          + 2c_{1}|u(t)|_{H}^2 
     \leq M_{1}, \label{es1} 
     \\
     &\int_{0}^{t}|\mu(s)|_{V}^2\,ds \leq M_{1}, \label{es2} 
     \\
     &\int_{0}^{t}|\beta(u(s))|_{V}^2\,ds 
     \leq 2\bigl(M_{1}+|f|_{L^2(0, T; V)}^2 \bigr) \label{es3}, 
     \end{align}
 where $M_1:=2\int_{\Omega}\hat{\beta}(u_0) + |f|_{L^2(0, T; V)}^2$.
 \end{thm}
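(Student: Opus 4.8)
The plan is to recast \ref{P} as an abstract Cauchy problem in the Hilbert space $X := V^{*}$ and then invoke \cite[Theorem~3.6]{Brezis}. Note first that, by \eqref{defF}, the operator $F : V \to V^{*}$ is precisely the weak realization of $-\Delta + 1$ under the homogeneous Neumann condition, so that whenever $\beta(v) \in V$ one has $(-\Delta+1)\beta(v) = F\beta(v)$ in $V^{*}$ and $(\mu, z)_{V} = \langle F\mu, z\rangle_{V^{*}, V}$. Hence \eqref{de4}--\eqref{de5}, combined with $Ff = g$ from \eqref{Ffg}, are equivalent to the single equation
\begin{equation*}
u'(t) + F\beta(u(t)) = g(t) \quad \mbox{in}\ V^{*}\ \mbox{for a.a.}\ t\in(0, T), \qquad u(0) = u_{0}.
\end{equation*}
It therefore suffices to realize $v \mapsto F\beta(v)$ as a subdifferential on $V^{*}$. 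To this end I would introduce $\psi : V^{*} \to [0, +\infty]$ by
\begin{equation*}
\psi(v) :=
\begin{cases}
\displaystyle\int_{\Omega}\hat{\beta}(v) & \mbox{if}\ v \in H\ \mbox{and}\ \hat{\beta}(v) \in L^1(\Omega),\\[2mm]
+\infty & \mbox{otherwise}.
\end{cases}
\end{equation*}

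The central step is to prove that $\psi$ is proper, lower semicontinuous and convex on $V^{*}$, and that its subdifferential in $V^{*}$ is exactly $\partial_{V^{*}}\psi(v) = F\beta(v)$, with domain those $v \in D(\psi)$ for which $\beta(v) \in V$. Properness follows from (C3), which gives $u_{0} \in D(\psi)$; convexity is inherited from $\hat{\beta}$; lower semicontinuity I would obtain by combining the coercivity $\hat{\beta}(r) \geq c_{1}|r|^{2}$ from (C1) (which makes sublevel sets of $\psi$ bounded in $H$) with the weak lower semicontinuity of the convex integral functional $v \mapsto \int_{\Omega}\hat{\beta}(v)$ on $H$. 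For the subdifferential, the easy inclusion $F\beta(v) \in \partial_{V^{*}}\psi(v)$ rests on the symmetry of $F$ and the identity $(F\beta(v), w - v)_{V^{*}} = \langle w - v, \beta(v)\rangle_{V^{*}, V} = \int_{\Omega}(w - v)\beta(v)$ for $w \in D(\psi)$, together with the pointwise convexity inequality $\hat{\beta}(w) - \hat{\beta}(v) \geq \beta(v)(w - v)$.

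The reverse inclusion is the step I expect to be the main obstacle. Since $\partial_{V^{*}}\psi$ is automatically maximal monotone and contains the monotone graph of $v \mapsto F\beta(v)$, it is enough to show that $F\beta$ is itself maximal monotone, i.e.\ to verify the range condition: for every $h \in V^{*}$ the resolvent equation $v + F\beta(v) = h$, namely the elliptic problem $v + (-\Delta + 1)\beta(v) = h$ with $\partial_{\nu}\beta(v) = 0$, admits a solution with $\beta(v) \in V$. This is exactly where unboundedness of $\Omega$ is delicate, since no compact Sobolev embedding is available, and where the local integrability and truncation hypotheses built into (C1) (that $\beta(z) \in L^{1}_{\mathrm{loc}}(\Omega)$ and $\hat{\beta}(z + \psi) \in L^{1}(\Omega)$ for $\psi \in C_{\mathrm{c}}^{\infty}(\Omega)$) are invoked, to pass between the pointwise relation $\beta = \partial\hat{\beta}$ and the functional $\partial_{V^{*}}\psi$ and to identify the boundary condition.

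Granting this characterization, \cite[Theorem~3.6]{Brezis} applies: $g \in L^{2}(0, T; H) \subset L^{2}(0, T; V^{*})$ and $u_{0} \in D(\psi)$ yield a unique $u \in H^{1}(0, T; V^{*})$ solving $u' + \partial_{V^{*}}\psi(u) \ni g$, $u(0) = u_{0}$; setting $\mu := \beta(u) - f = -F^{-1}u'$ gives $\mu \in L^{2}(0, T; V)$ and recovers \eqref{de4}--\eqref{de6}. Finally I would derive the estimates from the energy identity obtained by pairing the equation with $u'$ in $V^{*}$: using the chain rule $\frac{d}{dt}\psi(u(t)) = (u'(t), F\beta(u(t)))_{V^{*}}$, Young's inequality, the coercivity $\hat{\beta}(r) \geq c_{1}|r|^{2}$, and the isometry identities $|g|_{V^{*}} = |f|_{V}$ and $|\mu|_{V} = |F^{-1}u'|_{V} = |u'|_{V^{*}}$, one obtains \eqref{es1}; then \eqref{es2} is immediate and \eqref{es3} follows from $\beta(u) = \mu + f$ and $|\beta(u)|_{V}^{2} \leq 2|\mu|_{V}^{2} + 2|f|_{V}^{2}$. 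The bound \eqref{es1} also furnishes $u \in L^{\infty}(0, T; H)$.
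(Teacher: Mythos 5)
Your overall architecture coincides with the paper's: the same convex functional $\phi(z)=\int_\Omega\hat\beta(z)$ on $V^*$ with domain $\{z\in H:\hat\beta(z)\in L^1(\Omega)\}$, the same proof of properness, convexity and lower semicontinuity via the coercivity $\hat\beta(r)\ge c_1|r|^2$ and weak lower semicontinuity on $H$, the same reduction to Br\'ezis's theorem with datum $Ff=g$, and the same energy estimates (your derivation of \eqref{es1}--\eqref{es3} is exactly the paper's). The genuine gap is in the one step you yourself flag as the main obstacle: the identification $\partial_{V^*}\phi(v)=F\beta(v)$. You propose to obtain the reverse inclusion by showing that $v\mapsto F\beta(v)$ is maximal monotone, i.e.\ by solving the resolvent equation $v+(-\Delta+1)\beta(v)=h$, $\partial_\nu\beta(v)=0$, for every $h\in V^*$. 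You do not prove this, and it cannot be waved through: it is a nonlinear Neumann elliptic problem on an unbounded domain, where the compactness tools one would normally use are unavailable; and any attempt to deduce its solvability from the maximal monotonicity of $\partial\phi$ (which does give $R(I+\partial\phi)=V^*$) is circular, since that deduction already presupposes $\partial\phi(v)=\{F\beta(v)\}$. The local integrability and translation hypotheses in (C1) are not there to make that elliptic problem solvable; they serve a different argument.

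The paper's Lemma \ref{lem.3.2} avoids the range condition entirely. Given $z^*\in\partial\phi(z)$, insert $w=z\pm\lambda\psi$ with $\psi\in C_{\mathrm{c}}^\infty(\Omega)$ into the subdifferential inequality and divide by $\lambda$; the assumptions in (C1) that $\hat\beta(z\pm\psi)\in L^1(\Omega)$ and $\beta(z)\in L^1_{\mathrm{loc}}(\Omega)$ provide exactly the $L^1$ majorants (namely $|\beta(z)\psi|+|\hat\beta(z\pm\psi)|$) needed to pass to the limit $\lambda\searrow0$ by Lebesgue's theorem, yielding $(z^*,\psi)_{V^*}=\int_\Omega\beta(z)\psi$ for all test functions; du Bois Reymond's lemma then gives $F^{-1}z^*=\beta(z)$ a.e., and since $F^{-1}z^*\in V$ this delivers $\beta(z)\in V$ and $z^*=F\beta(z)$ with no elliptic solvability required. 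Your ``easy inclusion'' is then the converse and closes the equivalence. To complete your proof you should replace the maximal-monotonicity detour by this direct variational identification; supplying instead an independent existence proof for the resolvent equation on an unbounded $\Omega$ would be substantially harder than the theorem itself and is precisely what the paper is structured to avoid.
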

\subsection{Main result for \ref{Pep}}

We will assume that $\pi_{\ep}$ and $u_{0\ep}$ satisfy the following conditions:
\begin{enumerate}
\item[(C4)] $\pi_{\ep} : \mathbb{R} \to \mathbb{R}$ is                          
 a Lipschitz continuous function and $\pi_{\ep}(0) = 0$ 
 for all $\ep\in(0, 1]$. 
 Moreover, there exists a constant $c_{2}(\ep)>0$ depending on $\ep$ 
 such that there exists $\overline{\ep} \in (0, 1]$ satisfying 
 $c_2(\ep) < 2c_1$ for all $\ep \in (0, \overline{\ep}]$ and 
   \begin{equation}\label{a3ineq}
   \bigl|\pi_{\ep}'\bigr|_{L^{\infty}(\mathbb{R})} 
   \leq c_{2}(\ep)
   \quad \mbox{for all}\ \ep\in(0, 1].
   \end{equation} 
 Moreover, $r\mapsto\frac{\ep}{2}r^2+\hat{\pi_{\ep}}(r)$ 
 is convex, where 
 $\hat{\pi_{\ep}}(r):=\int_{0}^{r}\pi_{\ep}(s)\,ds$.
 \item[(C5)] 
 Let $u_{0\ep}\in H^1(\Omega)$ 
 fulfill $\hat{\beta}(u_{0\ep})\in L^1(\Omega)$ 
 and 
   \begin{equation}\label{a4ineq}
   |u_{0\ep}|_{L^2(\Omega)}^2 \leq c_{3}(\ep),\quad 
   \int_{\Omega}\hat{\beta}(u_{0\ep}) \leq c_{3}(\ep), \quad 
   \ep|u_{0\ep}|_{H^1(\Omega)}^2 \leq c_{3}(\ep),
   \end{equation}
 where $c_{3}(\ep) > 0$ is a constant depending on $\ep$.
\end{enumerate}
%
%
%
%
Let $H$, $V$ and $W$ be as in Section \ref{Sec1.2}.
Then we define weak solutions of \ref{Pep} as follows.
 \begin{df}        
 A pair $(u_{\ep}, \mu_{\ep})$ with 
    \begin{align*}
    &u_{\ep}\in H^1(0, T; V^{*})\cap L^{\infty}(0, T; V)\cap 
    L^2(0, T; W), 
    \\
    &\mu_{\ep}\in L^2(0, T; V)
    \end{align*}
 is called a {\it weak solution} of \ref{Pep} if 
 $(u_{\ep} ,\mu_{\ep})$ 
 satisfies 
    \begin{align}
        & \bigl\langle u_{\ep}'(t), z\bigr\rangle_{V^{*}, V} + 
          \bigl(\mu_{\ep}(t), z\bigr)_{V} = 0 \quad 
          \mbox{for all}\ z \in V\ 
          \mbox{and a.a}.\ t\in(0, T), \label{de7}
     \\[3mm]
        & \mu_{\ep}(t) = \ep(-\Delta + I)u_{\ep}(t) + 
        \beta(u_{\ep}(t)) + \pi_{\ep}(u_{\ep}(t)) - f(t) 
        \quad \mbox{in}\ V
        \quad \mbox{for a.a.}\ t\in(0, T), \label{de8}
     \\[3mm]
        & u_{\ep}(0) = u_{0\ep} 
        \quad \mbox{a.e.\ on}\ \Omega. \label{de9}
     \end{align}
 \end{df}
Now the main result for \ref{Pep} reads as follows.
%
 \begin{thm}\label{maintheorem2}
 Assume {\rm (C1)-(C5)}. Then there exists $\overline{\ep}\in(0, 1]$ such that 
 for every $\ep\in(0, \overline{\ep}]$                                    
 there exists a unique weak solution $(u_{\ep}, \mu_{\ep})$ of 
 {\rm \ref{Pep}}, satisfying
     \begin{equation*}
        u_{\ep} \in H^1(0, T; V^{*})
        \cap L^{\infty}(0, T; V)\cap L^2(0, T; W), 
        \quad \mu_{\ep} \in L^2(0, T; V). 
     \end{equation*} 
 Moreover, for all $t\in[0, T]$ and $\ep\in(0, \overline{\ep}]$, 
     \begin{align}
     &\int_{0}^{t}\bigl|u_{\ep}'(s)\bigr|_{V^{*}}^2\,ds + 
     \ep|u_{\ep}(t)|_{V}^2 + \bigl(2c_1- c_2(\ep)\bigr)|u_{\ep}(t)|_{H}^2
     \leq M_{2}(\ep), \label{epes1}
     \\
     &\int_{0}^{t}|\mu_{\ep}(s)|_{V}^2\,ds \leq M_{2}(\ep), 
     \label{epes2}
     \\
     &\int_{0}^{t}|\beta(u_{\ep}(s))|_{H}^2\,ds 
     \leq 3\left(M_{2}(\ep) + \frac{c_2(\ep)^2M_{2}(\ep)T}{2c_1- c_2(\ep)}
                                                   + |f|_{L^2(0, T; V)}^2 \right) \label{epes3}, 
     \\
     &\int_{0}^{t}|\ep u_{\ep}(s)|_{W}^2\,ds 
     \leq 16L^2\left(M_{2}(\ep) + \frac{c_2(\ep)^2M_{2}(\ep)T}{2c_1- c_2(\ep)} 
                                                      + |f|_{L^2(0, T; V)}^2 \right) \label{epes4}, 
     \end{align}
 where $M_{2}(\ep):=3c_3(\ep) + c_2(\ep)c_3(\ep) + |f|_{L^2(0, T; V)}^2$ 
 and L is a positive constant appearing in 
 the elliptic regularity estimate 
 $|w|_{W} \leq L|(-\Delta + I)w|_{H}$ for all $w \in W$.
 \end{thm}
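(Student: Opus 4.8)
The plan is to recast \ref{Pep} as a single abstract Cauchy problem in the Hilbert space $X := V^{*}$ equipped with the inner product \eqref{innerVstar}, so that Br\'ezis's theorem \cite[Theorem 3.6]{Brezis} applies directly, and then to read off \eqref{epes1}--\eqref{epes4} from the variational structure. I would define $\psi_{\ep} : V^{*} \to [0, +\infty]$ by
\[
\psi_{\ep}(z) :=
\begin{cases}
\dfrac{\ep}{2}|z|_{V}^2 + \displaystyle\int_{\Omega}\hat{\beta}(z)
+ \displaystyle\int_{\Omega}\hat{\pi_{\ep}}(z)
& \mbox{if}\ z\in V\ \mbox{and}\ \hat{\beta}(z)\in L^1(\Omega),
\\[2mm]
+\infty & \mbox{otherwise.}
\end{cases}
\]
Eliminating $\mu_{\ep}$ from \eqref{de7}--\eqref{de8}, using $(\mu_{\ep}, z)_{V} = \langle F\mu_{\ep}, z\rangle_{V^{*}, V}$ and $Ff=g$ (see \eqref{Ffg}), a weak solution should satisfy $u_{\ep}' + F\mu_{\ep} = 0$ in $V^{*}$ with $F\mu_{\ep} + g = F[\ep(-\Delta+I)u_{\ep} + \beta(u_{\ep}) + \pi_{\ep}(u_{\ep})]$. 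Thus \ref{Pep} is formally equivalent to
\[
u_{\ep}'(t) + \partial\psi_{\ep}(u_{\ep}(t)) \ni g(t)
\quad \mbox{in}\ V^{*}\ \mbox{for a.a.}\ t\in(0, T),
\qquad u_{\ep}(0) = u_{0\ep},
\]
where $\partial\psi_{\ep}$ is the subdifferential with respect to $(\cdot,\cdot)_{V^{*}}$.

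The first real task is to verify that $\psi_{\ep}$ is proper, convex and lower semicontinuous on $V^{*}$ and to compute $\partial\psi_{\ep}$. Properness follows from (C5), which gives $\psi_{\ep}(u_{0\ep})<\infty$. Convexity uses (C4): writing $\frac{\ep}{2}|z|_{V}^2 = \frac{\ep}{2}\int_{\Omega}(|\nabla z|^2 + z^2)$, the combination $\frac{\ep}{2}\int_{\Omega}z^2 + \int_{\Omega}\hat{\pi_{\ep}}(z)$ is convex because $r\mapsto\frac{\ep}{2}r^2+\hat{\pi_{\ep}}(r)$ is, and the gradient term and $\int_{\Omega}\hat{\beta}(z)$ are convex. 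For lower semicontinuity I would argue \emph{without compactness}, which is exactly the point on an unbounded $\Omega$: if $z_n\to z$ in $V^{*}$ with $\psi_{\ep}(z_n)$ bounded, then $(z_n)$ is bounded in $V$, so a subsequence converges weakly in $V$, hence in $H$, to $z$, and each convex integrand is weakly lower semicontinuous on $H$ while $\frac{\ep}{2}|\cdot|_{V}^2$ is weakly lower semicontinuous on $V$. To identify $\partial\psi_{\ep}$, I would first prove the inclusion $F[\ep(-\Delta+I)z + \beta(z) + \pi_{\ep}(z)]\in\partial\psi_{\ep}(z)$ for $z\in W$ with $\beta(z)\in H$, via the elementary identity $(F\xi, y-z)_{V^{*}} = (\xi, y-z)_{H}$ for $\xi\in V$ (a consequence of \eqref{defF}--\eqref{innerVstar}), integration by parts under the Neumann condition in \eqref{W}, and the subgradient inequalities for $\hat{\beta}$, for the quadratic gradient term, and for $\frac{\ep}{2}r^2+\hat{\pi_{\ep}}(r)$. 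The converse inclusion --- that every element of $D(\partial\psi_{\ep})$ lies in $W$ with $\beta(z)\in H$ and the subdifferential is exactly this operator --- is the main obstacle: it requires extracting the Euler--Lagrange relation from the variational inequality (here the technical parts of (C1) on $\beta(z)\in L^1_{\rm loc}(\Omega)$ and $\hat{\beta}(z+\psi)\in L^1(\Omega)$ legitimize the admissible variations) and then invoking the elliptic regularity estimate $|w|_{W}\le L|(-\Delta+I)w|_{H}$, which holds on $\Omega$ because $\pa\Omega$ is smooth and bounded.

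Granting this, \cite[Theorem 3.6]{Brezis} applies with $X=V^{*}$, $\psi=\psi_{\ep}$, $\tilde f = g\in L^2(0,T;V^{*})$ and $u_{0\ep}\in D(\psi_{\ep})$, yielding a unique $u_{\ep}\in H^1(0,T;V^{*})$ with $\psi_{\ep}(u_{\ep})\in L^{\infty}(0,T)$ and $u_{\ep}(t)\in D(\partial\psi_{\ep})$ for a.a.\ $t$. The bound $\psi_{\ep}(u_{\ep})\in L^{\infty}(0,T)$ gives $u_{\ep}\in L^{\infty}(0,T;V)$; setting $\mu_{\ep} := -F^{-1}u_{\ep}'$ recovers \eqref{de7}, the identification of $\partial\psi_{\ep}$ gives \eqref{de8} together with $u_{\ep}\in L^2(0,T;W)$, and $u_{\ep}(0)=u_{0\ep}$ gives \eqref{de9}. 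Uniqueness of $(u_{\ep},\mu_{\ep})$ is immediate, since $u_{\ep}$ is unique and $\mu_{\ep}$ is determined by $u_{\ep}'$.

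Finally, the estimates follow by testing. Pairing $u_{\ep}'+\partial\psi_{\ep}(u_{\ep})\ni g$ with $u_{\ep}'$ in $(\cdot,\cdot)_{V^{*}}$ and using the chain rule $\frac{d}{dt}\psi_{\ep}(u_{\ep})=(\xi,u_{\ep}')_{V^{*}}$ for $\xi\in\partial\psi_{\ep}(u_{\ep})$ gives, after Young's inequality on $(g,u_{\ep}')_{V^{*}}$ and the coercivity $\int_{\Omega}\hat{\beta}(u_{\ep})+\int_{\Omega}\hat{\pi_{\ep}}(u_{\ep})\ge(c_1-\tfrac{c_2(\ep)}{2})|u_{\ep}|_{H}^2$ (valid since $c_2(\ep)<2c_1$ for $\ep\le\overline{\ep}$), exactly \eqref{epes1} with $M_2(\ep)$ as stated, where $|g|_{L^2(0,T;V^{*})}=|f|_{L^2(0,T;V)}$ by \eqref{Ffg} and the initial terms are controlled by (C5). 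Since $F\mu_{\ep}=-u_{\ep}'$ yields $|\mu_{\ep}|_{V}=|u_{\ep}'|_{V^{*}}$, estimate \eqref{epes2} follows from \eqref{epes1}. For \eqref{epes3} I would multiply \eqref{de8} by $\beta(u_{\ep})$ in $H$; after integration by parts the term $\ep((-\Delta+I)u_{\ep},\beta(u_{\ep}))_{H}$ is nonnegative by monotonicity of $\beta$ and $\beta(0)=0$, so $|\beta(u_{\ep})|_{H}$ is bounded through $|\mu_{\ep}|_{H}$, $|\pi_{\ep}(u_{\ep})|_{H}\le c_2(\ep)|u_{\ep}|_{H}$ and $|f|_{H}$, and inserting the bound $|u_{\ep}(t)|_{H}^2\le M_2(\ep)/(2c_1-c_2(\ep))$ from \eqref{epes1} gives \eqref{epes3}. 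Lastly \eqref{epes4} follows by applying $|w|_{W}\le L|(-\Delta+I)w|_{H}$ to $\ep u_{\ep}$ and estimating $\ep(-\Delta+I)u_{\ep}=\mu_{\ep}-\beta(u_{\ep})-\pi_{\ep}(u_{\ep})+f$ in $H$ through \eqref{epes2}, \eqref{epes3} and (C4).
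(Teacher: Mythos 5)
Your overall architecture coincides with the paper's: the same convex functional on $V^{*}$, Br\'ezis's theorem for the abstract Cauchy problem, $\mu_{\ep}:=-F^{-1}u_{\ep}'$, and the same energy-type testing for \eqref{epes1}--\eqref{epes4}. The genuine gap is in the step you yourself flag as ``the main obstacle'': the proof that every $z\in D(\partial\psi_{\ep})$ lies in $W$ with $\beta(z)\in H$. Your plan --- extract the Euler--Lagrange relation from the variational inequality and then invoke $|w|_{W}\le L|(-\Delta+I)w|_{H}$ --- does not close. Condition (C1) only legitimizes the variations $z\pm\lambda\psi$ with $\psi\in C_{\mathrm{c}}^{\infty}(\Omega)$ (that is the only class for which $\hat{\beta}(z+\psi)\in L^1(\Omega)$ is guaranteed), so the resulting identity $\ep(-\Delta+I)z+\beta(z)+\pi_{\ep}(z)=F^{-1}z^{*}$ holds only in ${\cal D}'(\Omega)$: compactly supported test functions carry no information about $\partial_{\nu}z$ on $\partial\Omega$, and a distributional identity with $\beta(z)$ known merely to lie in $L_{{\rm loc}}^{1}(\Omega)$ yields neither $z\in H^2(\Omega)$ nor $\beta(z)\in H$. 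The elliptic estimate $|w|_{W}\le L|(-\Delta+I)w|_{H}$ is stated for $w\in W$, i.e.\ it presupposes exactly the $H^2$-regularity and the Neumann condition you are trying to establish, so invoking it at this point is circular.

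This is precisely where the paper takes a detour you omit: it introduces the auxiliary functional $\phi_{\ep}^{H}$ on $H$ (Lemma \ref{lem.4.2}), splits it as $\phi_{H}^{(1)}+\phi_{H}^{(2)}$ where $D(\phi_{H}^{(1)})=V$ carries no $\hat{\beta}$-constraint --- so that there the variations $w\pm\lambda v$ with arbitrary $v\in V$ are admissible and the Neumann condition does come out of the Euler--Lagrange computation --- and then proves $\partial\phi_{\ep}^{H}=\partial\phi_{H}^{(1)}+\partial\phi_{H}^{(2)}$ with $D(\partial\phi_{\ep}^{H})=W\cap\{w\ |\ \beta(w)\in H\}$ by combining Okazawa's inequality $(-\Delta w,\beta_{\lambda}(w))_{H}\ge 0$ (Lemma \ref{pre5}) with Barbu's perturbation theorem for sums of maximal monotone operators (Lemma \ref{pre6}); finally the inclusion $D(\partial\phi_{\ep})\subset D(\partial\phi_{\ep}^{H})$ gives \eqref{inW}. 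Without some substitute for this machinery, your identification of $\partial\psi_{\ep}$ --- and with it the regularity $u_{\ep}\in L^2(0,T;W)$, the validity of \eqref{de8} in $V$, and the nonnegativity of $\ep(-\Delta u_{\ep},\beta(u_{\ep}))_{H}$ used in \eqref{epes3}, which also rests on Lemma \ref{pre5} rather than on a formal integration by parts --- remains unproved.
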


%

\subsection{Outline of this paper}

 The strategy in the proofs of the main theorems is as follows. 
 As to Theorem \ref{maintheorem1}, 
 by setting a proper lower semicontinuous 
 convex function $\phi$ well, we can rewrite \ref{P} as an
 abstract nonlinear evolution equation with simple form 
 by the subdifferential of $\phi$:
    $$
    u'(t) + \partial\phi(u(t)) = g(t) \quad \mbox{in}\ V^{*},
    $$
 so that we can solve \ref{P} even on unbounded domains directly with 
 monotonicity methods (Lemma \ref{pre3}).  
 Moreover, from this, 
 Colli and Fukao \cite{CF-2016} proved apriori estimates for 
 solutions of \ref{P} by the limit of apriori estimates 
 for solutions of \ref{Pep} as $\ep \searrow 0$, while 
 we can obtain apriori estimates for 
 solutions of \ref{P} 
 directly. 
 The proof of Theorem \ref{maintheorem2} is parallel to that of 
 Theorem \ref{maintheorem1}, and hence we need not consider 
 one more approximation problem (P)$_{\ep, \lambda}$ 
 which cannot be used when $\Omega$ is unbounded. 
 In Theorem \ref{maintheorem3} 
 we can establish an error estimate between 
 the solution of \ref{P} and the solution of \ref{Pep} 
 without one more approximation of \ref{Pep}  
 even on unbounded domains.

 This paper is organized as follows. 
 In Section \ref{Sec2} 
 we give the definition and basic results for 
 subdifferentials of 
 proper lower semicontinuous convex functions 
 and useful results for 
 proving the main theorems.
 Sections \ref{Sec3} and \ref{Sec4} are devoted to the proofs of 
 Theorems \ref{maintheorem1} and \ref{maintheorem2}. 
 In Section \ref{Sec5} we prove an error estimate 
 between the solution of \ref{P} and the solution of \ref{Pep}.
 In Section \ref{Sec6} we give examples similar to  
 the porous media and the fast diffusion equations. 

\section{Preliminaries}\label{Sec2}

 We first give the definition and 
 basic results for subdifferentials of convex functions.
 \begin{df}\label{predf}
 Let $X$ be a Hilbert space. Given 
 a proper lower semicontinuous (l.s.c.\ for short) 
 convex function
 $\phi:X \to \overline{\mathbb{R}},$                                            
 the mapping $\partial\phi: X \to X$ defined by
    \begin{equation*}
      \partial\phi(z) := \bigl\{\tilde{z} \in X \ 
      |\ (\tilde{z}, w-z)_{X} \leq \phi(w) - \phi(z) 
      \quad \mbox{for all} \ w \in X \bigr\}
    \end{equation*} 
 is called the {\it subdifferential} operator of $\phi$, 
 with domain 
 $D(\partial\phi)
 :=\{z \in X\ |\ \partial\phi(z)\neq\emptyset\}$. 
 \end{df}
 The following lemma 
 is well-known (see e.g., Barbu \cite[Theorem 2.8]{Barbu1}).
 \begin{lem}\label{pre1}                                                                   
 Let $X$ be a Hilbert space and 
 let $\phi:X \to \overline{\mathbb{R}}$ 
 be a proper l.s.c.\ convex function. 
 Then $\partial\phi$ 
 is maximal monotone in $X$.
 \end{lem}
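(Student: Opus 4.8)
The plan is to verify the two constituents of maximal monotonicity in turn, treating monotonicity as immediate and reserving the real work for maximality. First I would read monotonicity straight off Definition \ref{predf}: if $\tilde{z}_i \in \partial\phi(z_i)$ for $i = 1, 2$, then testing the defining inequality for $\partial\phi(z_1)$ against $w = z_2$ and the one for $\partial\phi(z_2)$ against $w = z_1$ and adding the results makes the $\phi$-terms cancel, leaving $(\tilde{z}_1 - \tilde{z}_2,\, z_1 - z_2)_X \geq 0$. Hence $\partial\phi$ is monotone, and the whole issue is maximality.

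For maximality I would invoke the standard surjectivity characterization (Minty): a monotone operator $A$ on the Hilbert space $X$ is maximal monotone if and only if $R(I + A) = X$. It therefore suffices to show that for every $f \in X$ the inclusion $f \in z + \partial\phi(z)$ has a solution $z$. The idea is to produce $z$ as the minimizer of
$$ J(w) := \tfrac{1}{2}\,|w - f|_X^2 + \phi(w), \qquad w \in X. $$
Because $\phi$ is proper, l.s.c.\ and convex, it is bounded below by an affine function, so $J$ is proper, strictly convex, l.s.c.\ and coercive. The existence of a (unique) minimizer $z$ then follows from the direct method: a minimizing sequence is bounded by coercivity, hence admits a weakly convergent subsequence, and weak lower semicontinuity of $J$ (a convex l.s.c.\ functional is weakly l.s.c.) passes to the weak limit.

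It then remains to turn minimality into the desired inclusion, and here I would avoid any abstract sum rule for subdifferentials by a direct difference-quotient argument. For any $w$ with $\phi(w) < \infty$ and any $t \in (0, 1)$, convexity gives $\phi\bigl((1-t)z + tw\bigr) \leq (1-t)\phi(z) + t\phi(w)$; inserting this into $J(z) \leq J\bigl(z + t(w - z)\bigr)$, expanding the quadratic term, cancelling the common $\tfrac{1}{2}|z - f|_X^2$, dividing by $t$, and letting $t \searrow 0$ yields $(f - z,\, w - z)_X \leq \phi(w) - \phi(z)$. This inequality is trivially true when $\phi(w) = +\infty$, so it holds for all $w \in X$, which is precisely $f - z \in \partial\phi(z)$, i.e.\ $f \in (I + \partial\phi)(z)$. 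Thus $R(I + \partial\phi) = X$ and maximality follows.

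I expect the main obstacle to be the existence step for the minimizer of $J$, since it is where the structural hypotheses on $\phi$ really enter: the affine lower bound gives coercivity, and weak sequential compactness together with weak lower semicontinuity closes the argument in the reflexive (here Hilbert) setting. Everything downstream — the first-order optimality computation and the passage from surjectivity of $I + \partial\phi$ to maximality via Minty — is then routine, provided the surjectivity characterization is taken as known.
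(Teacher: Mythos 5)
Your argument is correct and complete: monotonicity falls out of the two defining inequalities, and maximality is reduced via Minty's surjectivity criterion to solving $f\in z+\partial\phi(z)$, which you do by minimizing $J(w)=\tfrac12|w-f|_X^2+\phi(w)$ (coercive thanks to the affine minorant of a proper l.s.c.\ convex function, weakly l.s.c.\ since convex plus l.s.c.) and then extracting the subgradient inequality from the first-order difference quotient. Note that the paper itself gives no proof of this lemma at all --- it simply cites Barbu's monograph (Theorem 2.8 there) as a well-known fact --- so you have not diverged from the paper so much as supplied the standard textbook argument that the citation points to. The only ingredient you take on faith is the easy direction of Minty's theorem (monotone plus $R(I+\partial\phi)=X$ implies maximal), which is a two-line computation if one wanted full self-containment; everything else, including the fact that the minimizer $z$ satisfies $\phi(z)<\infty$ so the difference-quotient step is legitimate, is in order.
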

 The next asserts the chain rule. 
 For the proof see e.g., Showalter \cite[Lemma IV.4.3]{S-1997}.
 \begin{lem}\label{pre2}
 Let $\psi : X \to \overline{\mathbb{R}}$ be a proper, convex                           
 and l.s.c.\ function on a Hilbert space $X$. 
 If $u \in H^1(0,T; X)$ and there exists $v \in L^2(0,T; X)$ 
 such that $v \in \partial\psi(u)$ a.e.\ on $[0, T]$, 
 then the function $\psi \circ u$ is absolutely 
 continuous on $[0, T]$ and 
    \begin{equation*}
    \frac{d}{dt} \psi(u(t)) 
    = \bigl(w(t), u'(t)\bigr)_{X} 
    \quad \mbox{for a.a.}\ t\in[0, T]
    \end{equation*}
 for any function $w$ satisfying 
 $w(t) \in \partial\psi(u(t))$ for a.a.\ $t\in[0, T]$.
 \end{lem}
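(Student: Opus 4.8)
The plan is to prove absolute continuity of $\sigma := \psi\circ u$ first, and then to identify its a.e.\ derivative by a squeezing argument. The starting point is the subdifferential inequality from Definition \ref{predf}: writing $v(t)\in\partial\psi(u(t))$, for a.a.\ $s,t\in[0,T]$ one obtains the two--sided estimate
\[
\bigl(v(t),u(s)-u(t)\bigr)_X \le \psi(u(s))-\psi(u(t)) \le \bigl(v(s),u(s)-u(t)\bigr)_X
\]
by applying the inequality once with base point $u(t)$ and test point $u(s)$, and once with the roles reversed. I also record that $v(t)\in\partial\psi(u(t))$ forces $u(t)\in D(\partial\psi)\subset D(\psi)$, so $\sigma(t)<+\infty$ for a.a.\ $t$, and that the representative of $u$ in $H^1(0,T;X)$ is absolutely continuous, with $u'(t)$ existing for a.a.\ $t$.

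For the absolute continuity I would pass through the Moreau--Yosida regularization. Since $\partial\psi$ is maximal monotone (Lemma \ref{pre1}), for $\lambda>0$ the function $\psi_\lambda$ is convex and Fr\'echet differentiable with Lipschitz gradient $\nabla\psi_\lambda=(\partial\psi)_\lambda$ (the Yosida approximation), and $\psi_\lambda\uparrow\psi$ pointwise as $\lambda\downarrow0$. Because $\psi_\lambda$ is Lipschitz on the compact set $u([0,T])$ and $u$ is absolutely continuous, the map $t\mapsto\psi_\lambda(u(t))$ is absolutely continuous, and the elementary $C^1$ chain rule gives $\tfrac{d}{dt}\psi_\lambda(u(t))=(\nabla\psi_\lambda(u(t)),u'(t))_X$ a.e.; integrating,
\[
\psi_\lambda(u(s))-\psi_\lambda(u(t))=\int_t^s \bigl(\nabla\psi_\lambda(u(\tau)),u'(\tau)\bigr)_X\,d\tau.
\]
Now I let $\lambda\downarrow0$. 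The left side converges to $\psi(u(s))-\psi(u(t))$ by monotone convergence. On the right, $\nabla\psi_\lambda(u(\tau))\to(\partial\psi)^0(u(\tau))$ in $X$ with $|\nabla\psi_\lambda(u(\tau))|_X\le|(\partial\psi)^0(u(\tau))|_X\le|v(\tau)|_X$, so the integrand is dominated by $|v(\tau)|_X\,|u'(\tau)|_X\in L^1(0,T)$, and dominated convergence yields
\[
\psi(u(s))-\psi(u(t))=\int_t^s \bigl((\partial\psi)^0(u(\tau)),u'(\tau)\bigr)_X\,d\tau .
\]
This exhibits $\sigma$ as the indefinite integral of an $L^1$ function, hence $\sigma$ is absolutely continuous on $[0,T]$.

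It remains to identify the derivative for an arbitrary selection $w$ with $w(t)\in\partial\psi(u(t))$, requiring no integrability of $w$. Fix $t_0$ where $u'(t_0)$ exists and $\sigma$ is differentiable (a.a.\ $t_0$). Using the subdifferential inequality at base point $u(t_0)$ with test points $u(t_0\pm h)$: the forward quotient satisfies $\tfrac{\sigma(t_0+h)-\sigma(t_0)}{h}\ge\bigl(w(t_0),\tfrac{u(t_0+h)-u(t_0)}{h}\bigr)_X\to\bigl(w(t_0),u'(t_0)\bigr)_X$, giving $\sigma'(t_0)\ge(w(t_0),u'(t_0))_X$; the backward quotient satisfies $\tfrac{\sigma(t_0)-\sigma(t_0-h)}{h}\le\bigl(w(t_0),\tfrac{u(t_0)-u(t_0-h)}{h}\bigr)_X\to\bigl(w(t_0),u'(t_0)\bigr)_X$, giving the reverse inequality. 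Hence $\tfrac{d}{dt}\psi(u(t_0))=(w(t_0),u'(t_0))_X$. The point of keeping $w$ evaluated at the fixed endpoint $t_0$ in both quotients is precisely that it sidesteps the failure of pointwise convergence of $\tau\mapsto w(\tau)$, which I expect to be the main obstacle: the naive squeeze using the two--sided estimate with both endpoints moving would require continuity of the selection, and it is exactly this difficulty that forces the detour through the Yosida approximation to secure absolute continuity.
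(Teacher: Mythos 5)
The paper does not actually prove this lemma: it states it as a known chain rule and refers to Showalter \cite[Lemma IV.4.3]{S-1997} (it is likewise Lemme 3.3 in Br\'ezis's book \cite{Brezis}), so there is no internal proof to compare against. Your argument is, in essence, the standard textbook proof behind that citation: regularize by Moreau--Yosida, use the $C^1$ chain rule for $\psi_\lambda\circ u$, pass to the limit via $\psi_\lambda\uparrow\psi$ and the domination $|\nabla\psi_\lambda(u(\tau))|_X\le|(\partial\psi)^0(u(\tau))|_X\le|v(\tau)|_X$ (valid at a.a.\ $\tau$, where $u(\tau)\in D(\partial\psi)$), and then identify the derivative for an \emph{arbitrary} selection $w$ by the one-sided difference-quotient squeeze anchored at the fixed time $t_0$. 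All the main ingredients are correct, and your closing remark about why $w$ must stay evaluated at $t_0$ --- avoiding any continuity or integrability of the selection --- is exactly the right point.

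One step deserves patching. Your limit passage yields $\psi(u(s))-\psi(u(t))=\int_t^s\bigl((\partial\psi)^0(u(\tau)),u'(\tau)\bigr)_X\,d\tau$ only for $s,t$ outside the null set where $v(\tau)\in\partial\psi(u(\tau))$ may fail (indeed $\psi(u(\tau))$ could a priori be $+\infty$ on that set). This shows that $\sigma:=\psi\circ u$ agrees \emph{a.e.} with an absolutely continuous function $g$, not yet that $\sigma$ itself is absolutely continuous on $[0,T]$; moreover your squeeze uses differentiability of $\sigma$ (not merely of $g$) at $t_0$. The gap is genuine but fixable: lower semicontinuity of $\psi$ and continuity of $u$ give $\sigma(t_0)\le\liminf_n\sigma(t_n)=g(t_0)$ for \emph{every} $t_0$, approaching through good times $t_n$; for the reverse inequality at an arbitrary $t_0$, use the subdifferential inequality $\sigma(t_0)\ge\sigma(s)+\bigl(v(s),u(t_0)-u(s)\bigr)_X$ at good times $s$ and choose $s_n\to t_0$ with $|v(s_n)|_X\,|u(t_0)-u(s_n)|_X\to0$, which is possible because $\int_I|v|_X^2$ and $\int_I|u'|_X^2$ are uniformly small on small intervals $I\ni t_0$, so on $I_n$ of length $\delta_n$ one may pick $s_n$ with $|v(s_n)|_X=o(\delta_n^{-1/2})$ while $|u(t_0)-u(s_n)|_X\le\delta_n^{1/2}\,\|u'\|_{L^2(I_n;X)}=\delta_n^{1/2}\cdot o(1)$. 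This gives $\sigma=g$ everywhere, hence the asserted absolute continuity, after which your squeeze applies verbatim. (Alternatively, run the squeeze against $g$ directly, using $\sigma\le g$ everywhere and $\sigma(t_0)=g(t_0)$ at a.e.\ $t_0$.) With that addendum your proof is complete and matches the cited classical argument.
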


The following lemma plays a key role in the direct proof 
of existence of solutions 
to \ref{P} and \ref{Pep} individually.
 \begin{lem}[Br\'ezis {\cite[Theoreme 3.6]{Brezis}}]\label{pre3}
 Let $X$ be a Hilbert space and 
 let $\psi:X \to \overline{\mathbb{R}}$ 
 be a proper l.s.c.\ convex function. 
 If $u_{0}\in D(\psi)$ and $\tilde{f}\in L^2(0, T; X)$, 
 then 
 there exists a unique function $u$ 
 such that $u\in H^1(0, T; X)$, 
 $u(t)\in D(\partial\psi)$ for a.a.\ $t\in(0, T)$ and 
 $u$ solves the following initial value problem:
    \begin{equation*}
       \begin{cases}
         u'(t) + \partial\psi(u(t)) \ni \tilde{f}(t) 
         \quad \mbox{in}\ X \quad \mbox{for a.a.}\ t\in(0, T),
         \\[2mm]
         u(0) = u_0 \quad \mbox{in}\ X. 
       \end{cases}
    \end{equation*}
 \end{lem}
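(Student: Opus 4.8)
The plan is to construct the solution by the Yosida (resolvent) regularization of the maximal monotone operator $A:=\partial\psi$, which is maximal monotone by Lemma~\ref{pre1}, and then to pass to the limit using the gradient structure together with a monotonicity (Minty-type) argument. First I would introduce, for $\lambda>0$, the resolvent $J_\lambda:=(I+\lambda A)^{-1}$ and the Yosida approximation $A_\lambda:=\lambda^{-1}(I-J_\lambda)$, recalling the standard facts that $A_\lambda$ is monotone and globally Lipschitz with constant $1/\lambda$, that $A_\lambda z\in A(J_\lambda z)$, that $|A_\lambda z|_X\le|A^0 z|_X$ on $D(A)$, and---crucially for the gradient case---that $A_\lambda=\partial\psi_\lambda$ is the Fr\'echet derivative of the Moreau--Yosida regularization $\psi_\lambda$, with $\psi_\lambda(z)\le\psi(z)$ and $\psi_\lambda$ bounded below by an affine minorant of $\psi$. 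Because $A_\lambda$ is Lipschitz, the regularized Cauchy problem
\begin{align*}
u_\lambda'(t)+A_\lambda\bigl(u_\lambda(t)\bigr)=\tilde f(t)\quad\text{in }X,\qquad u_\lambda(0)=u_0,
\end{align*}
admits a unique global solution $u_\lambda\in H^1(0,T;X)$ by the Hilbert-space Cauchy--Lipschitz theorem, since $\tilde f\in L^2(0,T;X)$.

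The next step is to derive a priori estimates uniform in $\lambda$. Testing the regularized equation with $u_\lambda'(t)$ and using the chain rule $\frac{d}{dt}\psi_\lambda(u_\lambda(t))=(A_\lambda u_\lambda(t),u_\lambda'(t))_X$ gives, after the bound $(\tilde f,u_\lambda')_X\le\frac12|\tilde f|_X^2+\frac12|u_\lambda'|_X^2$ and $\psi_\lambda(u_0)\le\psi(u_0)<\infty$ (here $u_0\in D(\psi)$ enters),
\begin{align*}
\tfrac12\int_0^t|u_\lambda'(s)|_X^2\,ds+\psi_\lambda\bigl(u_\lambda(t)\bigr)\le\psi(u_0)+\tfrac12\int_0^t|\tilde f(s)|_X^2\,ds.
\end{align*}
Combined with the affine lower bound on $\psi_\lambda$, this yields uniform bounds on $u_\lambda'$ in $L^2(0,T;X)$ and on $\psi_\lambda(u_\lambda(t))$, whence $A_\lambda u_\lambda=\tilde f-u_\lambda'$ is bounded in $L^2(0,T;X)$ and $u_\lambda$ is bounded in $H^1(0,T;X)$.

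The heart of the argument---and the step I expect to be the main obstacle---is the passage to the limit $\lambda\searrow0$, specifically identifying the weak limit of the nonlinear term $A_\lambda u_\lambda$ as an element of $\partial\psi(u)$. I would first show that $\{u_\lambda\}$ is Cauchy in $C([0,T];X)$: subtracting the equations for $u_\lambda$ and $u_\mu$, testing with $u_\lambda-u_\mu$, and using the splitting $u_\lambda-u_\mu=(J_\lambda u_\lambda-J_\mu u_\mu)+(\lambda A_\lambda u_\lambda-\mu A_\mu u_\mu)$ together with the monotonicity $(A_\lambda u_\lambda-A_\mu u_\mu,J_\lambda u_\lambda-J_\mu u_\mu)_X\ge0$ yields $\frac12\frac{d}{dt}|u_\lambda-u_\mu|_X^2\le-(A_\lambda u_\lambda-A_\mu u_\mu,\lambda A_\lambda u_\lambda-\mu A_\mu u_\mu)_X$; integrating in time and invoking the uniform $L^2$-bound on $A_\lambda u_\lambda$ gives $\sup_t|u_\lambda(t)-u_\mu(t)|_X^2\le C(\lambda+\mu)$. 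Hence $u_\lambda\to u$ in $C([0,T];X)$ and, along a subsequence, $u_\lambda\rightharpoonup u$ in $H^1(0,T;X)$ and $A_\lambda u_\lambda\rightharpoonup\xi$ in $L^2(0,T;X)$. Since $|J_\lambda u_\lambda-u_\lambda|_X=\lambda|A_\lambda u_\lambda|_X\to0$ in $L^2$, we also get $J_\lambda u_\lambda\to u$. Because $A_\lambda u_\lambda\in A(J_\lambda u_\lambda)$ and $A$ is maximal monotone, hence demiclosed, a Minty-type limiting argument---passing to the limit in $(A_\lambda u_\lambda-w,J_\lambda u_\lambda-z)_X\ge0$ for arbitrary $[z,w]$ in the graph of $A$---shows $\xi(t)\in A(u(t))=\partial\psi(u(t))$ for a.a.\ $t$. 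Passing to the limit in the equation then gives $u'+\partial\psi(u)\ni\tilde f$ a.e., while the convergence $u_\lambda\to u$ in $C([0,T];X)$ preserves $u(0)=u_0$.

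Finally, uniqueness is immediate from monotonicity: if $u$ and $v$ are two solutions, with $\xi_u\in\partial\psi(u)$ and $\xi_v\in\partial\psi(v)$, then subtracting and testing with $u-v$ gives $\frac12\frac{d}{dt}|u(t)-v(t)|_X^2=-(\xi_u-\xi_v,u-v)_X\le0$, so $|u(t)-v(t)|_X$ is nonincreasing, and since $u(0)=v(0)=u_0$ we conclude $u\equiv v$.
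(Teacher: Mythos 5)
The paper does not prove this lemma at all: it is quoted verbatim from Br\'ezis \cite[Theoreme 3.6]{Brezis} and used as a black box. Your argument is correct and is essentially the classical proof from that reference --- Yosida/Moreau regularization, the energy estimate obtained by testing with $u_\lambda'$ (where $u_0\in D(\psi)$ enters through $\psi_\lambda(u_0)\le\psi(u_0)<\infty$), the Cauchy estimate in $C([0,T];X)$ via the splitting $u_\lambda-u_\mu=(J_\lambda u_\lambda-J_\mu u_\mu)+(\lambda A_\lambda u_\lambda-\mu A_\mu u_\mu)$, and identification of the limit by demiclosedness of the maximal monotone graph; the only point worth spelling out more carefully is that the affine minorant of $\psi$ controls $\psi_\lambda(u_\lambda(t))$ from below only after absorbing the resulting linear term $|u_\lambda(t)|_X$ using the $L^2$ bound on $u_\lambda'$ (a Gronwall-type step), but this is standard and does not affect the validity of the proof.
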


\vspace{10pt}

\section{Existence of solutions to \ref{P}}\label{Sec3}
%
\subsection{Convex function 
for Proof of Theorem \ref{maintheorem1}}

 Let $H$, $V$ and $W$ be as in \eqref{HV} and \eqref{W}.
 We define a function $\phi : V^{*}\to\overline{\mathbb{R}}$                   
 as
 $$
 \phi(z)=
   \begin{cases}
   \displaystyle\int_{\Omega}\hat{\beta}(z(x))\,dx 
   & \mbox{if}\ 
   z\in D(\phi)
   :=\{z\in H\ |\ \hat{\beta}(z)\in L^1(\Omega) \},\ 
   \\[3mm]
   +\infty 
   & \mbox{otherwise}.
   \end{cases}
 $$
 \begin{lem} \label{lem.3.1}
 Let $\phi$ be as above. Then 
 $\phi$ is a proper l.s.c.\ convex function on $V^{*}$.                              
 \end{lem}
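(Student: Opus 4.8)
The plan is to verify the three defining properties in turn, with lower semicontinuity as the only substantial point.

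First I would dispatch properness and convexity. Since $\hat\beta \geq 0$, the integral $\int_\Omega \hat\beta(z)$ is nonnegative wherever it is defined, so $\phi$ never attains $-\infty$; and because $\hat\beta(0)=0$ we have $0 \in D(\phi)$ with $\phi(0)=0$, so $\phi$ is proper. For convexity I would fix $z_1, z_2 \in V^*$ and $\lambda \in [0,1]$: if either point lies outside $D(\phi)$ the inequality is trivial, and otherwise $z_1, z_2 \in D(\phi) \subset H$ and pointwise convexity of $\hat\beta$ gives
\[
\hat\beta\bigl(\lambda z_1(x) + (1-\lambda)z_2(x)\bigr) \leq \lambda\hat\beta(z_1(x)) + (1-\lambda)\hat\beta(z_2(x)) \quad \text{for a.a. } x \in \Omega.
\]
The right-hand side is integrable, hence so is the nonnegative left-hand side, placing $\lambda z_1 + (1-\lambda)z_2$ in $D(\phi)$ and yielding convexity after integration.

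The heart of the proof is lower semicontinuity on $V^*$. I would first record that the functional $\Phi(z) := \int_\Omega \hat\beta(z)$, viewed on $H$ with the same domain, is lower semicontinuous for the strong topology of $H$: given $z_n \to z$ in $L^2(\Omega)$, one passes to a subsequence realizing $\liminf_n \Phi(z_n)$ and then to a further subsequence converging a.e., so that continuity and nonnegativity of $\hat\beta$ together with Fatou's lemma give $\Phi(z) \leq \liminf_n \Phi(z_n)$. Combined with the convexity established above, Mazur's lemma upgrades this to weak lower semicontinuity of $\Phi$ on $H$. Then, to treat a sequence $z_n \to z$ in $V^*$, I would set $\ell := \liminf_n \phi(z_n)$, assume $\ell < +\infty$, and pick a subsequence with $\phi(z_{n_k}) \to \ell$. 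The coercivity $\hat\beta(r) \geq c_1|r|^2$ from (C1) forces
\[
c_1 |z_{n_k}|_H^2 \leq \int_\Omega \hat\beta(z_{n_k}) = \phi(z_{n_k}),
\]
so $\{z_{n_k}\}$ is bounded in $H$ and has a sub-subsequence $z_{n_{k_j}} \rightharpoonup w$ in $H$. Since the embedding $H \hookrightarrow V^*$ of the Gelfand triple $V \hookrightarrow H \hookrightarrow V^*$ is continuous, this yields $z_{n_{k_j}} \rightharpoonup w$ in $V^*$ as well; as $z_{n_{k_j}} \to z$ strongly in $V^*$, uniqueness of weak limits forces $w = z$, so $z \in H$ and $z_{n_{k_j}} \rightharpoonup z$ in $H$. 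Weak lower semicontinuity of $\Phi$ on $H$ then gives $\phi(z) = \Phi(z) \leq \liminf_j \Phi(z_{n_{k_j}}) = \ell$, the required inequality.

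The main obstacle is conceptual rather than computational: $\phi$ lives on $V^*$ whereas its effective domain sits inside $H$, so the topology in which lower semicontinuity is asserted is weaker than the one controlling the integrand. The device resolving this is the coercivity in (C1), which turns a bound on $\phi(z_n)$ into an $H$-bound, lets me extract an $H$-weak limit, and, through the Gelfand triple, identify that limit with the prescribed $V^*$-limit. Once $z$ is pinned down in $H$, the statement reduces to the classical weak lower semicontinuity of a convex integral functional on $L^2(\Omega)$.
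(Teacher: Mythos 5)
Your proof is correct and follows essentially the same route as the paper's: use the coercivity $\hat\beta(r)\ge c_1|r|^2$ from (C1) to turn a bound on $\liminf\phi(z_n)$ into an $H$-bound, extract a weak $H$-limit identified with the $V^*$-limit via the embedding $H\hookrightarrow V^*$, and conclude by weak lower semicontinuity of the convex integral functional on $H$. You merely spell out (via Fatou and Mazur) the standard fact that this functional is weakly l.s.c.\ on $H$, which the paper simply cites.
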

 \begin{proof}
 It follows that $\phi$ is proper and convex 
since $0 \in D(\phi)$ and $\hat{\beta}$ 
 is convex. 
 To prove the lower semicontinuity of $\phi$ on $V^*$ 
  let $\{{z}_{n}\}$ be a sequence in 
 $D(\phi)$ 
 such that $z_{n}\to z$ 
 in $V^{*}$ as $n\to+\infty$. 
 We put $\alpha:=\liminf_{n\to+\infty}\phi(z_{n})$. 
 If $\alpha = +\infty$, 
 then $\phi(z) \leq +\infty = \alpha 
 = \liminf_{n\to+\infty}\phi(z_{n})$. 
 We assume that $\alpha < +\infty$. 
 Then there exists a subsequence $\{z_{n_{k}}\}$ of $\{z_{n}\}$ 
 such that $\phi(z_{n_{k}}) \nearrow \alpha$ 
 as $k \to +\infty$ 
 and hence, 
 $\alpha \geq \phi(z_{n_{k}}) 
 = \int_{\Omega}\hat{\beta}(z_{n_{k}}) 
 \geq c_{1}|z_{n_{k}}|_{H}^2$ by (C1). 
 Thus $z_{n_{k}} \rightharpoonup z$ 
 weakly in $H$ as $k \to +\infty$. 
 Now let $\phi_H := \phi|_H$.
 Since $\hat{\beta}$ is proper l.s.c.\ convex, 
 the function $\phi_H$ is also proper l.s.c.\ convex on $H$
 and hence $\phi_H$ is weakly l.s.c.\ on $H$. 
 So it follows that 
 \[
     \phi_H (z) \le \liminf_{k\to+\infty} \phi_H (z_{n_k}) 
                       =  \liminf_{k\to+\infty} \int_{\Omega}\hat{\beta}(z_{n_{k}})
                       \le \alpha < +\infty.
\]
 Thus we see that $z \in D(\phi_{H}) = D(\phi)$ and 
 $\phi(z) = \phi_H (z) 
 \leq \alpha = \liminf_{n\to+\infty}\phi(z_{n})$.
 \end{proof}
 %
 %
 The following lemma plays an important role 
 in our proof (cf.\ \cite[Lemma 4.1]{FKP-2004}).
 \begin{lem} \label{lem.3.2}
 Let 
 $z\in D(\partial\phi)
 :=\{z\in D(\phi)\ |\ \partial\phi(z)\in V^{*}\} 
 \subset D(\phi)$. 
 Then $z^{*} \in \partial\phi(z)$ 
 in $V^{*}$ if and only if 
    \begin{equation}\label{subdiff1}
    F^{-1}z^{*} = \beta(z)
    \end{equation}
 Consequently, $\partial\phi$ 
 is single-valued and 
 for all $z\in D(\partial\phi)$ it holds that 
 $\beta(z)\in V$ and 
    \begin{equation}\label{subdiff2}
    \partial\phi(z) = F\beta(z). 
    \end{equation}
 \end{lem}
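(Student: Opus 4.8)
The plan is to reduce the characterization of $\partial\phi$ on $V^{*}$ to that of the subdifferential of the restricted functional $\phi_H := \phi|_H$ on $H$, which is the familiar integral functional whose subdifferential is the pointwise Nemytskii operator induced by $\beta$. The bridge is an identity for the $V^{*}$-inner product. For $z\in D(\phi)\subset H$ and any $w\in D(\phi)\subset H$, writing $v := F^{-1}z^{*}\in V$ and testing the defining relation $(\varphi, \psi)_V = \langle F\varphi, \psi\rangle_{V^{*}, V}$ for $\varphi := F^{-1}(w-z)$ against $\psi := v$, while using that $w-z\in H\hookrightarrow V^{*}$ satisfies $\langle w-z, \psi\rangle_{V^{*}, V} = (w-z, \psi)_H$, I would obtain
\[
(z^{*}, w-z)_{V^{*}} = \bigl(F^{-1}z^{*}, w-z\bigr)_H .
\]
Since $\phi(w) = \phi_H(w) = +\infty$ whenever $w\notin D(\phi)$, this identity shows at once that $z^{*}\in\partial\phi(z)$ in $V^{*}$ if and only if $F^{-1}z^{*}\in\partial\phi_H(z)$ in $H$.

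It then remains to prove that $v\in\partial\phi_H(z)$ holds precisely when $v = \beta(z)$ a.e.\ (so that, in particular, $\beta(z)\in H$). The easy inclusion uses only $\beta = \hat{\beta}\,'$ with $\hat{\beta}$ convex: the pointwise inequality $\hat{\beta}(w(x)) - \hat{\beta}(z(x)) \geq \beta(z(x))(w(x)-z(x))$ integrates over $\Omega$, the right-hand side being integrable since $\beta(z), w-z\in H$, and yields $(\beta(z), w-z)_H \leq \phi_H(w) - \phi_H(z)$. For the converse I would take the competitor $w = z + t\psi$ with $\psi\in C_{\mathrm{c}}^{\infty}(\Omega)$ and $t\in\mathbb{R}$; the clause of (C1) that $\hat{\beta}(z+\psi)\in L^1(\Omega)$ whenever $\hat{\beta}(z)\in L^1(\Omega)$ guarantees $w\in D(\phi_H)$. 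Inserting this $w$ into the subdifferential inequality, dividing by $t$, and letting $t\to 0^{\pm}$, with the monotonicity of the difference quotients of the convex $\hat{\beta}$ justifying passage to the limit, would produce $\int_{\Omega} v\psi = \int_{\Omega}\beta(z)\psi$ for every $\psi\in C_{\mathrm{c}}^{\infty}(\Omega)$, where $\beta(z)\in L^1_{\mathrm{loc}}(\Omega)$ by the corresponding clause of (C1). Hence $v = \beta(z)$ a.e.

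Combining the two steps gives the stated equivalence $z^{*}\in\partial\phi(z)\iff F^{-1}z^{*} = \beta(z)$. Reading this backwards, membership in $D(\partial\phi)$ forces $\beta(z) = F^{-1}z^{*}\in V$, and since $\beta$ is single-valued the element $z^{*}$ is uniquely determined, so $\partial\phi$ is single-valued with $\partial\phi(z) = F\beta(z)$. I expect the only genuinely delicate point to be the converse half of the integral-functional characterization, namely justifying the limit of the difference quotients $t^{-1}(\hat{\beta}(z+t\psi) - \hat{\beta}(z))$ and ensuring that the perturbed competitors remain in $D(\phi_H)$; this is exactly where the two technical clauses embedded in (C1) are needed, and handling them carefully is the crux of the argument.
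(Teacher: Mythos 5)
Your proposal is correct and follows essentially the same route as the paper: the identity $(z^{*}, w-z)_{V^{*}} = (F^{-1}z^{*}, w-z)_{H}$, perturbation by $z\pm\lambda\psi$ with $\psi\in C_{\mathrm{c}}^{\infty}(\Omega)$ (justified by the two technical clauses of (C1)), passage to the limit in the convex difference quotients, and du Bois Reymond's lemma for the forward direction, with the pointwise convexity inequality $\hat{\beta}(w)-\hat{\beta}(z)\geq\beta(z)(w-z)$ for the converse. Your explicit factoring through $\partial\phi_H$ is only a cosmetic repackaging of the paper's argument, which carries out the identical computation directly on $V^{*}$.
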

 \begin{proof}   
 Let $z\in D(\partial\phi)$ and $z^{*} \in \partial\phi(z)$. 
 Then it follows from the inclusion 
 $D(\partial\phi)\subset D(\phi)$ that $z\in D(\phi)$. 
 Hence we have by the definition of $\partial\phi$, 
    $$
    (z^{*}, w-z)_{V^{*}} 
    \leq 
    \int_{\Omega}
    \bigl(\hat{\beta}(w)-\hat{\beta}(z)\bigr)
    \quad \mbox{for all}\ w\in D(\phi).
    $$
 Here, choose $w = z \pm \lambda\psi$ ($\lambda > 0$)  
 in the above inequality 
 for each $\psi\in C_{\mathrm{c}}^{\infty}(\Omega)$. 
 Noting by (C1) that $z \pm \lambda\psi \in D(\phi)$, we obtain
  \begin{equation}\label{hasamiuti}
   \int_{\Omega}
             \frac{\hat{\beta}(z) - \hat{\beta}(z-\lambda\psi)}{\lambda} 
   \leq (z^{*}, \psi)_{V^{*}} 
   \leq \int_{\Omega}
             \frac{\hat{\beta}(z+\lambda\psi) - \hat{\beta}(z)}{\lambda}.
   \end{equation}
Here, since $\beta = \partial\hat{\beta}$, 
it follows from the definition of subdifferentials and 
the convexity and nonnegativity of $\hat{\beta}$ 
that 
\begin{align*}
\beta(z)\psi 
&\leq \frac{\hat{\beta}(z + \lambda\psi) - \hat{\beta}(z)}{\lambda} 
 = \frac{\hat{\beta}\bigl(\lambda (z+\psi)+(1-\lambda)z\bigr)
    - \hat{\beta}(z)}{\lambda} 
\leq \hat{\beta}(z + \psi), 
\\[2mm]
-\hat{\beta}(z - \psi)
&\leq \frac{\hat{\beta}(z)
    - \hat{\beta}\bigl(\lambda (z-\psi)+(1-\lambda)z\bigr)}{\lambda} 
= \frac{\hat{\beta}(z) - \hat{\beta}(z - \lambda\psi)}{\lambda} 
\leq \beta(z)\psi, 
\end{align*}
and hence we observe 
\begin{align*}
&\left|\frac{\hat{\beta}(z + \lambda\psi) - \hat{\beta}(z)}{\lambda} \right| 
\leq |\beta(z)\psi| + |\hat{\beta}(z + \psi)|, 
\\[2mm]
&\left|\frac{\hat{\beta}(z) - \hat{\beta}(z - \lambda\psi)}{\lambda} \right| 
\leq |\beta(z)\psi| + |\hat{\beta}(z - \psi)|. 
\end{align*}
Noting that $|\beta(z)\psi| + |\hat{\beta}(z \pm \psi)| \in L^1(\Omega)$ and 
$\hat{\beta}$ is differentiable because of (C1) and
passing to the limit $\lambda \searrow 0$ in \eqref{hasamiuti}, 
we infer from Lebesgue's convergence theorem that
    $$
    (z^{*}, \psi)_{V^{*}} 
    = \int_{\Omega}\hat{\beta}\,'(z)\psi
    = \int_{\Omega}\beta(z)\psi
    \quad \mbox{for all}\ 
    \psi \in C_{\mathrm{c}}^{\infty}(\Omega).
    $$
 Writing as  
 $(z^{*}, \psi)_{V^{*}} 
 = \bigl(F^{-1}z^{*}, \psi \bigr)_{H}$ 
 by \eqref{innerVstar}, 
 we see that 
    $$
    \int_{\Omega}\bigl(F^{-1}z^{*}\bigr)\psi
    = \int_{\Omega}\beta(z)\psi 
    \quad \mbox{for all}\ 
    \psi \in C_{\mathrm{c}}^{\infty}(\Omega).
    $$
 Thus, Since $\beta(z) \in L_{{\rm loc}}^{1}(\Omega)$ by (C1), 
 it follows from du Bois Reymond's lemma that
 $$
 F^{-1}z^{*} = \beta(z) \quad \mbox{a.e.\ on}\ \Omega.
 $$
 That is, \eqref{subdiff1} holds. Conversely, if 
\eqref{subdiff1} holds, 
then for all $w\in D(\phi)$, 
$$ 
(z^{*}, w-z)_{V^{*}} 
= \bigl(F^{-1}z^{*}, w-z \bigr)_{H} 
= \int_\Omega \beta(z)(w-z) 
\leq 
\int_{\Omega} \bigl(\hat{\beta}(w)-\hat{\beta}(z)\bigr), 
$$ 
where we have used $\beta = \partial\hat{\beta}$, 
and hence $z^{*} \in \partial\phi(z)$. 
Therefore we conclude that 
$\partial\phi$ 
is single-valued and 
for all $z\in D(\partial\phi)$, $\beta(z)\in V$ 
and \eqref{subdiff2} holds.
\end{proof} 
 Now we prove the first main theorem.
 %
 %
%
\subsection{Proof of Theorem \ref{maintheorem1}}

 \begin{prth1.1} 
 To prove existence of weak solutions to \ref{P} 
 we turn our eyes to the following initial value problem \eqref{K}:
    \begin{equation}\label{K}
       \begin{cases}
         u'(t) + \partial\phi(u(t)) = Ff(t) 
         \quad \mbox{in} \ V^{*}\quad \mbox{for a.e.}\ t\in[0, T],
         \\[2mm]
         u(0) = u_0 \quad \mbox{in}\ V^{*}. 
       \end{cases}
    \end{equation}
 Thanks to Lemma \ref{pre3}, 
 there exists a unique solution $u\in H^1(0, T; V^{*})$ 
 of \eqref{K} such that $u(t)\in D(\partial\phi)$ 
 for a.a.\ $t\in (0, T)$.
 Putting $\mu(t):=-F^{-1}(u'(t))$, 
 we deduce from \eqref{defF}, \eqref{innerVstar} and \eqref{subdiff2} 
 that $\mu\in L^2(0, T; V)$ and 
 $(u, \mu)$ satisfies \eqref{de4}-\eqref{de6}. 
 
 Next we show \eqref{es1}. 
 It follows from the equation in \eqref{K} that 
    \begin{align*}
    |u'(s)|_{V^{*}}^2 
    &= \bigl(u'(s), u'(s)\bigr)_{V^{*}} \\[1mm]
    &= \bigl(u'(s), -\partial\phi(u(s)) + Ff(s)\bigr)_{V^{*}} \\[1mm]
    &= -\bigl(u'(s), \partial\phi(u(s))\bigr)_{V^{*}} + 
                                     \bigl(u'(s), Ff(s)\bigr)_{V^{*}}. 
    \end{align*}
 Here Lemma \ref{pre2} gives  
    $$
    \bigl(u'(s), \partial\phi(u(s))\bigr)_{V^{*}} 
    = \frac{d}{ds}\phi(u(s)), 
    $$
 and \eqref{innerVstar} and Young's inequality yield 
    $$
    (u'(s), Ff(s))_{V^{*}} 
    = \langle u'(s), f(s) \rangle_{V^{*}, V} 
    \leq \frac{1}{2}|u'(s)|_{V^{*}}^2 + \frac{1}{2}|f(s)|_{V}^2.
    $$
 Therefore we obtain
    $$
    \frac{1}{2}|u'(s)|_{V^{*}}^2
    \leq -\frac{d}{ds}\phi(u(s)) + \frac{1}{2}|f(s)|_{V}^2.
    $$
 Integrating this inequality yields
    $$
    \frac{1}{2}\int_{0}^{t}|u'(s)|_{V^{*}}^2\,ds 
    \leq -\phi(u(t)) + \phi(u_{0}) + 
                           \frac{1}{2}|f|_{L^2(0, T; V)}^2,
    $$
 i.e., 
    $$
    \frac{1}{2}\int_{0}^{t}|u'(s)|_{V^{*}}^2\,ds + 
    \int_{\Omega}\hat{\beta}(u(t)) 
    \leq 
    \int_{\Omega}\hat{\beta}(u_{0}) + 
    \frac{1}{2}|f|_{L^2(0, T; V)}^2.
    $$
 Since (C1) implies 
    $$
    \int_{\Omega}\hat{\beta}(u(t)) \geq c_{1}|u(t)|_{H}^2, 
    $$
 we see that 
    $$
    \int_{0}^{t}|u'(s)|_{V^{*}}^2\,ds + 2c_{1}|u(t)|_{H}^2
    \leq 
    2\int_{\Omega}\hat{\beta}(u_{0}) + |f|_{L^2(0, T; V)}^2 =: M_1.
    $$
 This implies \eqref{es1}. Moreover, \eqref{es1} shows that $u\in L^{\infty}(0, T; H)$. 
 
 Next we show \eqref{es2}. 
 Since $\mu(s) = -F^{-1}\bigl(u'(s)\bigr)$, we have from 
 \eqref{defF} and \eqref{innerVstar} that 
 \begin{align*}
 \int_{0}^{t}|\mu(s)|_{V}^2\,ds 
 = \int_{0}^{t}\bigl|F^{-1}\bigl(u'(s)\bigr)\bigr|_{V}^2\,ds 
 = \int_{0}^{t}\bigl|u'(s)\bigr|_{V^{*}}^2\,ds. 
 \end{align*}
 Thus we obtain \eqref{es2} from \eqref{es1}.
 
 Next we verify \eqref{es3}. 
 From \eqref{de5} and Young's inequality we infer 
    \begin{align*}
    |\beta(u(s))|_{V}^2 &= \bigl(\beta(u(s)), \beta(u(s))\bigr)_{V} \\[1mm]
    &= \bigl(\mu(s) + f(s), \beta(u(s))\bigr)_{V} \\[1mm]
    &\leq |\mu(s)|_{V}^2 + |f(s)|_{V}^2 + 
                        \frac{1}{2}|\beta(u(s))|_{V}^2.
    \end{align*}
 Therefore, 
    $$
    \int_{0}^{t}|\beta(u(s))|_{V}^2\,ds 
    \leq 2\int_{0}^{t}|\mu(s)|_{V}^2\,ds + 
    2|f|_{L^2(0, T; V)}^2.
    $$
 Consequently, \eqref{es3} holds from \eqref{es2}. 
 \qed
 \end{prth1.1}

 \vspace{10pt}
 
 \section{Existence of solutions to \ref{Pep}}\label{Sec4}
%
\subsection{Preliminaries for \ref{Pep}}

 We first give a useful inequality.
 \begin{lem}\label{pre5}
 Let $\beta$ be a single-valued maximal monotone function 
 as in Section {\rm \ref{Sec1}}. 
 Then
    \begin{align*}
    &\bigl(-\Delta u, \beta_{\lambda}(u)\bigr)_{H} \geq 0 
    \quad \mbox{for all}\ 
    u\in W, 
    \\[1mm]
    &\bigl(-\Delta u, \beta(u)\bigr)_{H} \geq 0 
    \quad \mbox{for all}\ 
    u\in W\ \mbox{with}\ \beta(u)\in H,
    \end{align*}
 where $W=\bigl\{z\in H^2(\Omega)\ |\ \partial_{\nu}z = 0 \quad 
    \mbox{a.e.\ on}\ \partial\Omega\bigr\}$ and 
 $\{\beta_{\lambda}\}_{\lambda>0}$ 
 is the Yosida approximation 
 of 
 $\beta$: $\beta_{\lambda}
 :=\lambda^{-1}\bigl(I-(I+\lambda\beta)^{-1}\bigr)$.
 \end{lem}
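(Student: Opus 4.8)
The plan is to prove the first inequality by a direct integration by parts, using the boundary condition built into $W$ to annihilate the boundary term, and then to obtain the second inequality by passing to the limit $\lambda \searrow 0$.

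For the first inequality, fix $u \in W$. Recall that the Yosida approximation $\beta_{\lambda}$ is Lipschitz continuous with constant $\lambda^{-1}$, is monotone nondecreasing, and satisfies $\beta_{\lambda}(0) = 0$ (since $\beta(0) = 0$ by (C1) forces $(I+\lambda\beta)^{-1}(0) = 0$). Consequently $\beta_{\lambda}\circ u \in H^1(\Omega)$: indeed $|\beta_{\lambda}(u)| \leq \lambda^{-1}|u| \in H$, and the chain rule for the composition of a Lipschitz function with an $H^1$ function gives $\nabla(\beta_{\lambda}(u)) = \beta_{\lambda}'(u)\nabla u \in H$ a.e. Applying Green's identity to $u \in H^2(\Omega)$ and $\beta_{\lambda}(u) \in H^1(\Omega)$ yields
$$
\bigl(-\Delta u, \beta_{\lambda}(u)\bigr)_{H}
= \int_{\Omega}\nabla u \cdot \nabla\bigl(\beta_{\lambda}(u)\bigr)
- \int_{\partial\Omega}(\partial_{\nu}u)\,\beta_{\lambda}(u).
$$
The boundary integral vanishes because $\partial_{\nu}u = 0$ a.e.\ on $\partial\Omega$ by the definition of $W$, and the remaining volume integral equals $\int_{\Omega}\beta_{\lambda}'(u)\,|\nabla u|^2$, which is nonnegative since monotonicity of $\beta_{\lambda}$ gives $\beta_{\lambda}' \geq 0$ a.e. This establishes $(-\Delta u, \beta_{\lambda}(u))_{H} \geq 0$ for all $u \in W$.

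For the second inequality, fix $u \in W$ with the additional property $\beta(u) \in H$, and let $\lambda \searrow 0$. Since $\beta$ is single-valued, the standard properties of the Yosida approximation give the pointwise convergence $\beta_{\lambda}(u(x)) \to \beta(u(x))$ together with the domination $|\beta_{\lambda}(u(x))| \leq |\beta(u(x))|$; as $\beta(u) \in H = L^2(\Omega)$, Lebesgue's dominated convergence theorem yields $\beta_{\lambda}(u) \to \beta(u)$ in $H$. Because $-\Delta u \in H$ is fixed, the inner products converge, $(-\Delta u, \beta_{\lambda}(u))_{H} \to (-\Delta u, \beta(u))_{H}$, and since each term of the sequence is nonnegative by the first part, the limit is nonnegative as well. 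This gives the second inequality.

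The routine but delicate point, which I would state carefully rather than reprove, is the chain rule $\nabla(\beta_{\lambda}(u)) = \beta_{\lambda}'(u)\nabla u$ for the composition of the merely Lipschitz map $\beta_{\lambda}$ with the Sobolev function $u$ (a Stampacchia-type result), and the validity of Green's identity on the \emph{unbounded} domain $\Omega$. The latter causes no trouble here because $\partial\Omega$ is smooth and bounded and both factors lie in the appropriate Sobolev spaces, so the formula is legitimate exactly as on a bounded domain. I expect the only conceptual obstacle to be ensuring these two technical facts are invoked with the correct hypotheses; the rest of the argument is a two-line integration by parts followed by a dominated-convergence limit.
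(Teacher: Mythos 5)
Your proof is correct and follows essentially the same route as the paper: the paper simply cites Okazawa's argument (whose content is exactly the integration by parts with the Stampacchia chain rule that you carry out explicitly) for the first inequality, and obtains the second by the same passage to the limit $\lambda \searrow 0$ using $\beta_{\lambda}(u) \to \beta(u)$ in $H$ when $\beta(u) \in H$. The two technical points you flag (the chain rule for the Lipschitz map $\beta_{\lambda}$ and Green's formula on the unbounded domain with bounded boundary) are indeed the only delicate steps, and you invoke them with the correct hypotheses.
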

 \begin{proof}
 It follows from 
 Okazawa \cite[Proof of Theorem 3 with $a=b=0$]{O-1983} 
 that 
 $$
 \bigl(-\Delta u, \beta_{\lambda}(u)\bigr)_{H} \geq 0 
    \quad \mbox{for all}\ 
    u\in W\ \mbox{and}\ \lambda>0.
 $$
 Noting that $\beta_{\lambda}(u) \to \beta(u)$ in $H$ 
 as $\lambda \searrow 0$ if $\beta(u)\in H$ 
 (see e.g., {\cite[Proposition 2.6]{Brezis}} 
 or \cite[Theorem IV.1.1]{S-1997}), 
 we can obtain the second inequality. 
 \end{proof}

 The above and the next lemmas will be used 
 in order to regard \ref{Pep} as a problem  
 of the form stated in Lemma \ref{pre3}.
 \begin{lem}\label{pre6}
 Let $A$ and $B$ be maximal monotone operators in $H$ 
 such that 
    \begin{enumerate}
    \item[{\rm (i)}] $D(A) \cap D(B) \neq \emptyset$, 
    \item[{\rm (ii)}] $(Av,\ B_{\lambda}v)_{H} \geq 0$ 
    \quad for all $v \in D(A)$ 
    and $\lambda>0$, 
    \end{enumerate}
 where $\{B_{\lambda}\}_{\lambda>0}$ is 
 the Yosida approximation of $B$. 
 Then $A + B$ is maximal monotone.
 \end{lem}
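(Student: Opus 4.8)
The plan is to establish maximal monotonicity of $A+B$ through Minty's theorem. Since $A$ and $B$ are monotone, so is $A+B$ on $D(A)\cap D(B)\neq\emptyset$ (nonempty by (i)), and it therefore suffices to verify the range condition $R(I+A+B)=H$. Accordingly I would fix an arbitrary $h\in H$ and look for $v$ with $h\in v+Av+Bv$. The natural device is the Yosida approximation $B_{\lambda}$, which is everywhere defined, monotone and Lipschitz continuous, so that $A+B_{\lambda}$ is again maximal monotone; hence $R(I+A+B_{\lambda})=H$, and for each $\lambda>0$ there exist $v_{\lambda}\in D(A)$ and $\xi_{\lambda}\in Av_{\lambda}$ with
$$ v_{\lambda}+\xi_{\lambda}+B_{\lambda}v_{\lambda}=h. $$
The task then reduces to passing to the limit $\lambda\searrow 0$.

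Next I would derive $\lambda$-uniform a priori estimates. Testing the equation with $v_{\lambda}-v_0$ for a fixed reference point $v_0\in D(A)\cap D(B)$ and using monotonicity of $A$ and of $B_{\lambda}$ together with the bound $|B_{\lambda}v_0|_H\le|B^0 v_0|_H$, one obtains $|v_{\lambda}|_H\le C$. The decisive estimate is the bound on $B_{\lambda}v_{\lambda}$: testing the equation with $B_{\lambda}v_{\lambda}$ gives
$$ (v_{\lambda},B_{\lambda}v_{\lambda})_H+(\xi_{\lambda},B_{\lambda}v_{\lambda})_H+|B_{\lambda}v_{\lambda}|_H^2=(h,B_{\lambda}v_{\lambda})_H, $$
and here hypothesis (ii) is exactly what forces the cross term $(\xi_{\lambda},B_{\lambda}v_{\lambda})_H=(Av_{\lambda},B_{\lambda}v_{\lambda})_H\ge 0$ to be nonnegative, whence $|B_{\lambda}v_{\lambda}|_H\le|h-v_{\lambda}|_H\le C$; then $\xi_{\lambda}=h-v_{\lambda}-B_{\lambda}v_{\lambda}$ is bounded as well. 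I expect this to be the crux of the argument, since it is the only place where hypothesis (ii) intervenes and where the structure of the admissible perturbation is genuinely exploited.

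With these bounds in hand I would upgrade weak to strong convergence. Writing the equation for two parameters $\lambda,\mu$ and testing the difference with $v_{\lambda}-v_{\mu}$, the monotonicity of $A$ eliminates the $\xi$-terms, while splitting $v_{\lambda}=J_{\lambda}v_{\lambda}+\lambda B_{\lambda}v_{\lambda}$ (with $J_{\lambda}=(I+\lambda B)^{-1}$) and using monotonicity of $B$ leaves only the Yosida remainder, giving $|v_{\lambda}-v_{\mu}|_H^2\le(\lambda+\mu)C^2$. Hence $\{v_{\lambda}\}$ is Cauchy and $v_{\lambda}\to v$ strongly, and consequently $J_{\lambda}v_{\lambda}=v_{\lambda}-\lambda B_{\lambda}v_{\lambda}\to v$ strongly too. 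Extracting weak limits $B_{\lambda}v_{\lambda}\rightharpoonup w$ and $\xi_{\lambda}\rightharpoonup\xi$, the demiclosedness of the (maximal monotone) graphs of $B$ and $A$ applied to the pairs $[J_{\lambda}v_{\lambda},B_{\lambda}v_{\lambda}]\in B$ and $[v_{\lambda},\xi_{\lambda}]\in A$ yields $w\in Bv$ and $\xi\in Av$. Passing to the limit in the equation gives $v+\xi+w=h$, so $h\in v+Av+Bv\subset R(I+A+B)$. As $h$ was arbitrary, $R(I+A+B)=H$, and Minty's theorem concludes that $A+B$ is maximal monotone.
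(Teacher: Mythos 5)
Your proposal is correct and is, in substance, the same route as the paper's: the paper's entire proof is a one-line appeal to Barbu's Theoreme II.3.6, and your Yosida-approximation/Minty argument --- solving $v_\lambda+\xi_\lambda+B_\lambda v_\lambda=h$, using hypothesis (ii) exactly once to get the uniform bound $|B_\lambda v_\lambda|_H\le |h-v_\lambda|_H$, then the Cauchy estimate and demiclosedness of the graphs --- is precisely the standard proof of that cited perturbation theorem. No gaps.
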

 \begin{proof}
 We can show this lemma 
 by applying Barbu {\cite[Theoreme II.3.6]{Barbu2}}.
 \end{proof}

%
\subsection{Convex function for Proof of Theorem \ref{maintheorem2}}

 Let $\ep > 0$. Then we define a function 
 $\phi_{\ep} : V^{*}\to\overline{\mathbb{R}}$                     
 as
 $$
 \phi_{\ep}(z)=
   \begin{cases}
   \displaystyle\frac{\ep}{2}\int_{\Omega}
                   \bigl(|z(x)|^2 + |\nabla z(x)|^2\bigr)\,dx + 
   \int_{\Omega}\hat{\beta}(z(x))\,dx + 
   \int_{\Omega}\hat{\pi_{\ep}}(z(x))\,dx &
   \\
   \hspace{50mm}\mbox{if}\ 
   z\in D(\phi_{\ep}):=\{z\in V\ |\ 
                   \hat{\beta}(z)\in L^1(\Omega) \},\ &
   \\[3mm]
   +\infty 
   \hspace{42.7mm}\mbox{otherwise}. &
   \end{cases}
 $$
 \begin{lem}\label{lem.4.1}
 Let $\phi_{\ep}$ be as above. 
 Then there exists $\overline{\ep} \in (0, 1]$ 
 such that for all $\ep \in (0, \overline{\ep}]$, 
 $\phi_{\ep}$ is a proper 
 l.s.c.\ convex function on $V^{*}$.                            
 \end{lem}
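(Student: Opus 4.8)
The plan is to follow the template of Lemma~\ref{lem.3.1}, viewing $\phi_\ep$ as a perturbation of the functional $\phi$ from Section~\ref{Sec3} by the gradient contribution and by the anti-monotone term $\int_\Omega\hat{\pi_{\ep}}(z)$. The first observation is that (C4) is designed precisely so that convexity survives: writing
\[
\phi_\ep(z)=\frac{\ep}{2}\int_\Omega|\nabla z|^2+\int_\Omega\Bigl(\tfrac{\ep}{2}|z|^2+\hat{\pi_{\ep}}(z)\Bigr)+\int_\Omega\hat{\beta}(z),
\]
the three summands are respectively a nonnegative quadratic form on $V$, the integral of the convex integrand $r\mapsto\frac{\ep}{2}r^2+\hat{\pi_{\ep}}(r)$ (convex by (C4)), and the functional $\phi$ of Section~\ref{Sec3}. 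Hence $\phi_\ep$ is convex and its effective domain $D(\phi_\ep)$ is convex, and properness (that $\phi_\ep\not\equiv+\infty$) follows from $0\in D(\phi_\ep)$ with $\phi_\ep(0)=0$.

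The second step is a coercivity estimate, which is where $\overline{\ep}$ enters. Since $\pi_\ep(0)=0$ and $|\pi_\ep'|_{L^\infty(\mathbb{R})}\le c_2(\ep)$ by (C4), one gets $\hat{\pi_{\ep}}(r)\ge-\frac{c_2(\ep)}{2}r^2$; combined with $\hat{\beta}(r)\ge c_1|r|^2$ from (C1) and $c_2(\ep)<2c_1$ for $\ep\in(0,\overline{\ep}]$, this yields
\[
\phi_\ep(z)\ge \frac{\ep}{2}|\nabla z|_{L^2(\Omega)}^2+\Bigl(c_1-\tfrac{c_2(\ep)}{2}\Bigr)|z|_H^2\ge C_\ep|z|_V^2,\qquad C_\ep:=\min\Bigl\{\tfrac{\ep}{2},\,c_1-\tfrac{c_2(\ep)}{2}\Bigr\}>0.
\]
In particular $\phi_\ep\ge0$, so it never takes the value $-\infty$, completing properness.

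The third and main step is lower semicontinuity on $V^*$. I would first show that the restriction $\phi_\ep|_V$ is \emph{strongly} l.s.c. on $V$: the term $\frac{\ep}{2}|z|_V^2$ is continuous on $V$; the term $\int_\Omega\hat{\pi_{\ep}}(z)$ is continuous on $H$ (hence on $V$), because the bound $|\hat{\pi_{\ep}}(a)-\hat{\pi_{\ep}}(b)|\le c_2(\ep)(|a|+|b|)|a-b|$ gives continuity of the Nemytskii map on $H$; and $\int_\Omega\hat{\beta}(z)$ is exactly $\phi_H$, which is l.s.c. on $H$ (hence on $V$ via $V\hookrightarrow H$) as shown in Lemma~\ref{lem.3.1}. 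Being convex and strongly l.s.c. on the Hilbert space $V$, $\phi_\ep|_V$ is then weakly sequentially l.s.c. on $V$. With this in hand I would repeat the scheme of Lemma~\ref{lem.3.1}: given $z_n\to z$ in $V^*$ with $\alpha:=\liminf_n\phi_\ep(z_n)<+\infty$, pass to a subsequence realizing $\alpha$; the coercivity bound makes it bounded in $V$, so a further subsequence converges weakly in $V$ to some $w$, and since $V\hookrightarrow V^*$ is continuous and linear, hence weak-to-weak sequentially continuous, this $w$ must coincide with the $V^*$-limit $z$, giving $z\in V$ and $\phi_\ep(z)\le\liminf_n\phi_\ep(z_n)=\alpha$ by weak lower semicontinuity on $V$.

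The main obstacle is precisely this last passage: the gradient part of $\phi_\ep$ is not lower semicontinuous with respect to the weak $V^*$-topology in which the sequence is assumed to converge, so one cannot argue directly in $V^*$. The remedy is to upgrade $V^*$-convergence to weak $V$-convergence using the coercivity estimate, and then to invoke that a convex, strongly l.s.c. functional on a Hilbert space is automatically weakly l.s.c.; this is the crux of the proof and is exactly what makes the coercivity bound, and hence the restriction to $\ep\le\overline{\ep}$, indispensable rather than a mere technical convenience.
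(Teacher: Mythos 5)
Your proposal is correct and follows essentially the same route as the paper: properness and convexity from $0\in D(\phi_\ep)$ and the convexity of $\hat{\beta}$ and $r\mapsto\frac{\ep}{2}r^2+\hat{\pi_\ep}(r)$, then the coercivity bound $|\hat{\pi_\ep}(r)|\le\frac{c_2(\ep)}{2}r^2\le c_1 r^2$ (valid for $\ep\le\overline{\ep}$) to upgrade $V^*$-convergence of a minimizing subsequence to weak convergence in $V$, and finally the weak lower semicontinuity of the convex, strongly l.s.c.\ restriction $\phi_\ep|_V$ on $V$. Your write-up is in fact slightly more detailed than the paper's at the step where $\phi_\ep|_V$ is shown to be strongly l.s.c.\ on $V$ (continuity of the $\hat{\pi}_\ep$-Nemytskii map on $H$, identification of the weak $V$-limit with the $V^*$-limit), but the argument is the same.
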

 \begin{proof}
 Since 
 $0$ belongs to $D(\phi_{\ep})$ and 
 $r \mapsto \hat{\beta}(r)$, 
 $r\mapsto\frac{\ep}{2}r^2+\hat{\pi_{\ep}}(r)$ are convex, 
  it follows that $\phi_{\ep}$ is proper and convex.
 To prove the lower semicontinuity of $\phi_{\ep}$ in $V^{*}$ 
 let $\{{z}_{n}\}$ be a sequence in 
 $D(\phi_{\ep})$ 
 such that $z_{n}\to z$ 
 in $V^{*}$ as $n\to+\infty$. 
 We put $\alpha:=\liminf_{n\to+\infty}\phi_{\ep}(z_{n})$. 
 If $\alpha=+\infty$, then $\phi_{\ep}(z) \leq +\infty 
 = \alpha=\liminf_{n\to+\infty}\phi_{\ep}(z_{n})$. 
 We assume that $\alpha < +\infty$. 
 Then there exists a subsequence $\{z_{n_{k}}\}$ of $\{z_{n}\}$ 
 such that $\phi_{\ep}(z_{n_{k}}) \nearrow \alpha$ 
 as $k\to+\infty$ and hence, 
 \begin{align*}
 \alpha 
 \geq \phi_{\ep}(z_{n_{k}}) 
   = \int_{\Omega}\hat{\beta}(z_{n_{k}}) 
     + \int_{\Omega}\hat{\pi_{\ep}}(z_{n_{k}}) 
     + \frac{\ep}{2}|z_{n_{k}}|_{V}^2
 \geq c_{1}\int_{\Omega}|z_{n_{k}}|^2  
       + \int_{\Omega}\hat{\pi_{\ep}}(z_{n_{k}})
       + \frac{\ep}{2}|z_{n_{k}}|_{V}^2.
 \end{align*}
 Here, we deduce from (C4) that 
 there exists $\overline{\ep}\in(0, 1]$ such that 
 $c_2(\ep) < 2c_{1}$ 
 for all $\ep \in (0, \overline{\ep}]$.
 The definition of $\hat{\pi_{\ep}}$ shows that for all $\ep \in (0, \overline{\ep}]$,
 $$
 |\hat{\pi_{\ep}}(r)| 
 = \left|\int_{0}^{r}(\pi_{\ep}(s) - \pi_{\ep}(0))\,ds\right| 
 \leq |\pi_{\ep}'|_{L^{\infty}(\mathbb{R})}
                 \left|\int_{0}^{r}|s|\,ds\right| 
 \leq \frac{1}{2}c_2(\ep)|r|^2 
 \leq c_{1}|r|^2.
 $$ 
 Hence $\alpha \geq \frac{\ep}{2}|z_{n_{k}}|_{V}^2$. 
 Thus $z_{n_{k}} \rightharpoonup z$ 
 weakly in $V$ as $k \to +\infty$. 
 Now let $\phi_{\ep, V} := \phi_{\ep}|_{V}$. 
 Since $\hat{\beta}$ is proper l.s.c.\ convex and 
 $r\mapsto\frac{\ep}{2}r^2+\hat{\pi_{\ep}}(r)$ is convex, 
 the function $\phi_{\ep, V}$ is also proper l.s.c.\ convex on $V$ and hence 
 $\phi_{\ep, V}$ is weakly l.s.c.\ on $V$. So it follows that
 \begin{equation*}
 \phi_{\ep, V}(z) \leq \liminf_{k\to+\infty} \phi_{\ep, V}(z_{n_{k}}) 
 \leq \alpha < +\infty. 
 \end{equation*}
 Consequently, $z \in D(\phi_{\ep, V}) = D(\phi_{\ep})$ and 
 $\phi_{\ep}(z) 
 = \phi_{\ep, V}(z)
 \leq \alpha = \liminf_{n\to+\infty}\phi_{\ep}(z_{n})$.
 \end{proof}
 %
 %
 \begin{lem}\label{lem.4.2}
 Define a function 
 $\phi_{\ep}^{H} : H \to \overline{\mathbb{R}}$ as 
 $$
 \phi_{\ep}^{H}(w)=
   \begin{cases}
   \dfrac{1}{2}\displaystyle\int_{\Omega}|w|^2 + 
   \dfrac{\ep}{2}\displaystyle\int_{\Omega}
                   \bigl(|w|^2 + |\nabla w|^2\bigr) + 
   \int_{\Omega}\hat{\beta}(w) + 
   \int_{\Omega}\hat{\pi_{\ep}}(w)&
   \\
   \hspace{50mm}\mbox{if}\ 
   w\in D(\phi_{\ep}^{H}):=\{w\in V\ |\ 
                     \hat{\beta}(w)\in L^1(\Omega) \},\ &
   \\[3mm]
   +\infty 
   \hspace{42.7mm}\mbox{otherwise}. &
   \end{cases}
 $$
 Then $\phi_\ep^H$ is a proper l.s.c.\ convex function on $H$ and 
 \begin{equation}\label{inW}
 D(\partial\phi_{\ep}^{H}) \subset W.
 \end{equation}
 \end{lem}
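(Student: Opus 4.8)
The statement has two parts, and I would treat them in turn. \emph{Part 1 (proper, l.s.c., convex).} This is entirely parallel to Lemma~\ref{lem.4.1}, so I would only sketch it. Properness is immediate since $0\in D(\phi_{\ep}^{H})$ and $\phi_{\ep}^{H}(0)=0$, and convexity follows termwise: $\frac{1}{2}\int_{\Omega}|w|^2$ and $\int_{\Omega}\hat\beta(w)$ are convex, while $w\mapsto\frac{\ep}{2}\int_{\Omega}(|w|^2+|\nabla w|^2)+\int_{\Omega}\hat{\pi_{\ep}}(w)$ is convex because $r\mapsto\frac{\ep}{2}r^2+\hat{\pi_{\ep}}(r)$ is convex by (C4). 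For lower semicontinuity I would repeat the argument of Lemma~\ref{lem.4.1}: given $z_{n}\to z$ in $H$ with $\alpha:=\liminf_{n}\phi_{\ep}^{H}(z_{n})<+\infty$, pass to a subsequence realizing $\alpha$; the bound $|\hat{\pi_{\ep}}(r)|\le c_{1}|r|^2$ from (C4) together with $\hat\beta\ge c_{1}|\cdot|^2$ absorbs the $\hat{\pi_{\ep}}$ term and yields $\frac{\ep}{2}|z_{n_{k}}|_{V}^2\le\alpha$, whence $z_{n_{k}}\rightharpoonup z$ weakly in $V$; weak lower semicontinuity of the convex restriction to $V$ then gives $\phi_{\ep}^{H}(z)\le\alpha$.

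\emph{Part 2 ($D(\partial\phi_{\ep}^{H})\subset W$).} The plan is to identify $\partial\phi_{\ep}^{H}$ with the operator $w\mapsto w+\ep(-\Delta+I)w+\beta(w)+\pi_{\ep}(w)$ and show its domain sits inside $W$. First I would split $\phi_{\ep}^{H}=\psi_{\ep}^{0}+\psi_{\ep}^{1}+G$, where
$$\psi_{\ep}^{0}(w):=\tfrac{\ep}{2}\int_{\Omega}(|w|^2+|\nabla w|^2),\quad \psi_{\ep}^{1}(w):=\int_{\Omega}\hat\beta(w),\quad G(w):=\tfrac{1}{2}\int_{\Omega}|w|^2+\int_{\Omega}\hat{\pi_{\ep}}(w),$$
each extended by $+\infty$ off its natural domain. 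The function $G$ is convex (since $\tfrac{1}{2}r^2+\hat{\pi_{\ep}}(r)=(\tfrac{1}{2}-\tfrac{\ep}{2})r^2+(\tfrac{\ep}{2}r^2+\hat{\pi_{\ep}}(r))$ for $\ep\le1$), everywhere finite and Fr\'echet differentiable on $H$ with $G'(w)=w+\pi_{\ep}(w)$; hence the Moreau--Rockafellar sum rule gives $\partial\phi_{\ep}^{H}(w)=\partial(\psi_{\ep}^{0}+\psi_{\ep}^{1})(w)+w+\pi_{\ep}(w)$, so in particular $D(\partial\phi_{\ep}^{H})=D(\partial(\psi_{\ep}^{0}+\psi_{\ep}^{1}))$. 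By elliptic regularity on the smooth bounded boundary, $\partial\psi_{\ep}^{0}$ is $\ep(-\Delta+I)$ with domain exactly $W$, and $\partial\psi_{\ep}^{1}$ is multiplication by $\beta(\cdot)$ with domain $\{w:\beta(w)\in H\}$.

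The crux is to upgrade the trivial inclusion $\partial\psi_{\ep}^{0}+\partial\psi_{\ep}^{1}\subseteq\partial(\psi_{\ep}^{0}+\psi_{\ep}^{1})$ to an \emph{equality}, for then $D(\partial(\psi_{\ep}^{0}+\psi_{\ep}^{1}))=D(\partial\psi_{\ep}^{0})\cap D(\partial\psi_{\ep}^{1})=W\cap\{w:\beta(w)\in H\}\subset W$, which finishes the proof. This is exactly where Lemmas~\ref{pre5} and~\ref{pre6} enter. I would set $A:=\ep(-\Delta+I)$ (domain $W$) and $B:=\beta(\cdot)$, both maximal monotone in $H$: condition (i) of Lemma~\ref{pre6} holds since $0\in W$ and $\beta(0)=0$, and condition (ii) follows from
$$(Av,B_{\lambda}v)_{H}=\ep(-\Delta v,\beta_{\lambda}(v))_{H}+\ep(v,\beta_{\lambda}(v))_{H}\ge0\qquad(v\in W),$$
where the first term is nonnegative by Lemma~\ref{pre5} and the second because $r\beta_{\lambda}(r)\ge0$. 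Thus Lemma~\ref{pre6} makes $A+B$ maximal monotone; since $\partial(\psi_{\ep}^{0}+\psi_{\ep}^{1})$ is maximal monotone by Lemma~\ref{pre1} and contains the maximal monotone operator $A+B$, the two must coincide.

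The main obstacle is precisely this maximal-monotonicity and identification step: the termwise subdifferential computation is only formal, and without knowing that the sum is maximal monotone one cannot conclude that the abstract domain $D(\partial\phi_{\ep}^{H})$ carries the $H^2$-regularity and Neumann condition encoded in $W$. The analytic ingredient that unlocks everything is Okazawa's inequality $(-\Delta u,\beta_{\lambda}(u))_{H}\ge0$ from Lemma~\ref{pre5}, which is what verifies hypothesis (ii) of the perturbation criterion in Lemma~\ref{pre6}; the rest is bookkeeping with the sum rule and elliptic regularity.
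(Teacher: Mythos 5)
Your proof is correct and follows essentially the same route as the paper: both rest on Okazawa's inequality (Lemma \ref{pre5}) feeding into the perturbation criterion of Lemma \ref{pre6} to make the relevant sum of maximal monotone operators maximal monotone, after which the domain is identified as $W\cap\{w\ |\ \beta(w)\in H\}\subset W$. The only difference is bookkeeping: the paper groups $\frac12\int_{\Omega}|w|^2+\frac{\ep}{2}\int_{\Omega}(|w|^2+|\nabla w|^2)+\int_{\Omega}\hat{\pi_{\ep}}(w)$ into a single convex functional $\phi_H^{(1)}$, computes $\partial\phi_H^{(1)}(w)=w+\ep(-\Delta+I)w+\pi_{\ep}(w)$ with domain $W$ by a direct variational argument, and verifies hypothesis (ii) of Lemma \ref{pre6} for the pair $(\partial\phi_H^{(1)},\partial\phi_H^{(2)})$ using in addition the monotonicity of $r\mapsto\ep r+\pi_{\ep}(r)$, whereas you peel off $\frac12\int_{\Omega}|w|^2+\int_{\Omega}\hat{\pi_{\ep}}(w)$ as an everywhere-finite, continuously differentiable convex perturbation via the Moreau--Rockafellar sum rule and apply Lemma \ref{pre6} to the bare pair $(\ep(-\Delta+I),\beta)$ --- marginally cleaner in step (ii), at the cost of invoking one extra (standard) sum rule the paper does not need.
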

 \begin{proof}
 As in the proof of Lemma \ref{lem.4.1}, we first observe that 
 $\phi_\ep^H$ is a proper l.s.c.\ convex function on $H$. 
 Next we set $\phi_{H}^{(1)} : H \to \overline{\mathbb{R}}$ as 
 $$
 \phi_{H}^{(1)}(w)=
   \begin{cases}
   \dfrac{1}{2}\displaystyle\int_{\Omega}|w|^2 + 
   \dfrac{\ep}{2}\displaystyle\int_{\Omega}
                   \bigl(|w|^2 + |\nabla w|^2\bigr) + 
   \int_{\Omega}\hat{\pi_{\ep}}(w)
   &\mbox{if}\ w\in D(\phi_{H}^{(1)}):=V,\ 
   \\[3mm]
   +\infty &\mbox{otherwise} 
   \end{cases}
 $$
 and $\phi_{H}^{(2)} : H \to \overline{\mathbb{R}}$ as 
 $$
 \phi_{H}^{(2)}(w)=
   \begin{cases}
   \displaystyle\int_{\Omega}\hat{\beta}(w)
   &\mbox{if}\ 
   w\in D(\phi_{H}^{(2)}):=\{w\in H\ |\ 
                     \hat{\beta}(w)\in L^1(\Omega) \},
   \\[3mm]
   +\infty &\mbox{otherwise}.
   \end{cases}
 $$
 Then $\phi_{H}^{(1)}$ and $\phi_{H}^{(2)}$ are proper 
 l.s.c.\ convex functions on $H$ and 
\begin{align}
&w \in D(\partial\phi_{H}^{(1)}) \quad \Longrightarrow \quad
w \in W\ \mbox{and}\ 
\partial\phi_{H}^{(1)}(w) = w + \ep(-\Delta+I)w + \pi_{\ep}(w), 
\label{sub1} 
\\ 
&w \in D(\partial\phi_{H}^{(2)}) \quad \Longrightarrow \quad \partial\phi_{H}^{(2)}(w) = \beta(w). 
\label{sub2}
\end{align} 
Since \eqref{sub2} is well-known 
(see e.g., \cite[Example 2.8.3]{Brezis}, \cite[Example II.8.B]{S-1997}), 
we verify only \eqref{sub1}.
 Let $w\in D(\partial\phi_{H}^{(1)})$ and 
 $w^{*} \in \partial\phi_{H}^{(1)}(w)$. 
 Then it follows from the inclusion 
 $D(\partial\phi_{H}^{(1)})\subset D(\phi_{H}^{(1)})$ 
 that $w\in D(\phi_{H}^{(1)})$. 
 Hence we have from the definition of $\partial\phi_{H}^{(1)}$ that 
    \begin{align*}
    (w^{*}, \tilde{w}-w)_{H} 
    &\leq 
    \frac{1}{2}
    \int_{\Omega}\bigl(|\tilde{w}|^2 - |w|^2\bigr) +
    \frac{\ep}{2}
    \int_{\Omega}\bigl(|\tilde{w}|^2 - |w|^2\bigr) + 
      \frac{\ep}{2}
      \int_{\Omega}
      \bigl(|\nabla \tilde{w}|^2 - |\nabla w|^2\bigr)
    \\ 
    &\hspace{4.5mm}
     + \int_{\Omega}
          \bigl(\hat{\pi_{\ep}}(\tilde{w})
                            -\hat{\pi_{\ep}}(w)\bigr).
    \end{align*}
 Here, choose $\tilde{w} = w \pm \lambda v$ ($\lambda > 0$) 
 in the above inequality 
 for each $v \in V$ and divide 
 the both sides by $\lambda$ and finally pass to 
 the limit $\lambda \searrow 0$. Then we obtain 
 $$
    (w^{*}, v)_{H} 
    = \int_{\Omega}wv 
      + \ep\left(\int_{\Omega}wv + 
          \int_{\Omega}\nabla w\cdot\nabla v \right) 
      + \int_{\Omega}\pi_{\ep}(w)v
    \quad \mbox{for all}\ v \in V.
 $$
 Hence we see that
 $$
 \int_{\Omega}wv + \int_{\Omega}\nabla w\cdot\nabla v 
 = 
 \int_{\Omega}\frac{w^{*}-w-\pi_{\ep}(w)}{\ep}v 
 \quad \mbox{for all}\ v \in V.
 $$
 Thus we derive that $w \in W$ and 
 $$
 w^{*} = w + \ep(-\Delta+I)w + \pi_{\ep}(w).
 $$
 That is, \eqref{sub1} holds. 
 Now we show that \eqref{inW}.
 If 
 \begin{equation}\label{mono}
 \bigl(\partial\phi_{H}^{(1)}(w),\ 
                          \beta_{\lambda}(w) \bigr)_{H} 
 \geq 0 
 \quad \mbox{for all}\ 
 w \in D(\partial\phi_{H}^{(1)}),
 \end{equation}
 then $\partial\phi_{H}^{(1)} + \partial\phi_{H}^{(2)}$ is 
 maximal monotone by Lemma \ref{pre6} 
 and hence we have 
 $$
 \partial\phi_{\ep}^{H} 
 = \partial(\phi_{H}^{(1)} + \phi_{H}^{(2)}) 
 = \partial\phi_{H}^{(1)} + \partial\phi_{H}^{(2)}, 
 $$
 with
 \begin{equation}\label{cap}
 D(\partial\phi_{\ep}) 
 = D(\partial\phi_{H}^{(1)}) \cap D(\partial\phi_{H}^{(2)}); 
 \end{equation}
 note that $\phi_H=\phi_H^{(1)}+\phi_H^{(2)}$ is also proper l.s.c.\ convex and $\partial(\phi_H^{(1)}+\phi_H^{(2)}) \subset \partial\phi_H^{(1)} + \partial\phi_H^{(2)}$.
 We can show \eqref{mono} by using Lemma \ref{pre5}. Indeed, 
 since the function $r \mapsto \frac{\ep}{2}r^2 + \hat{\pi_{\ep}}(r)$ 
 is convex, it follows that $r \mapsto \ep r + \pi_{\ep}(r)$ is monotone,  
 so that the monotonicity of $\beta_{\lambda}$ yields 
 $$
 (\ep w + \pi_{\ep}(w),\ \beta_{\lambda}(w))_{H} \geq 0, 
 $$ 
 and hence we see from Lemma \ref{pre5} that 
 \begin{align*}
 \bigl(\partial\phi_{H}^{(1)}(w),\ 
                          \beta_{\lambda}(w) \bigr)_{H} 
 &= (w + \ep(-\Delta+I)w 
               + \pi_{\ep}(w),\ \beta_{\lambda}(w))_{H} 
 \\
 &= (w,\ \beta_{\lambda}(w))_{H} 
    + \ep(-\Delta w,\ \beta_{\lambda}(w))_{H} 
    + (\ep w + \pi_{\ep}(w),\ \beta_{\lambda}(w))_{H} 
 \\
 &\geq 0. 
 \end{align*}
 Therefore we obtain \eqref{cap}. On the other hand, 
 we infer from \eqref{sub1} that 
 \begin{align}\label{Dsub1}
 D(\partial\phi_{H}^{(1)}) 
 &= \bigl\{w \in D(\phi_{H}^{(1)})\ \bigl|\ 
                       \partial\phi_{H}^{(1)}(w) \in H \bigr\} 
 \\ \notag
 &= \bigl\{w \in V\ \bigl|\ w \in W,\ 
             w + \ep(-\Delta+I)w + \pi_{\ep}(w) \in H \bigr\} 
 \\ \notag
 &= W
 \end{align}
 and from \eqref{sub2} that 
 \begin{align}\label{Dsub2}
 D(\partial\phi_{H}^{(2)}) 
 &= \bigl\{w \in D(\phi_{H}^{(2)})\ \bigl|\ 
                       \partial\phi_{H}^{(2)}(w) \in H \bigr\} 
 \\ \notag
 &= \{w \in H\ \bigl|\ \beta(w) \in H \}. 
 \end{align}
 Thus, connecting \eqref{Dsub1} and \eqref{Dsub2} to \eqref{cap} 
 gives \eqref{inW}.
 \end{proof}
 \begin{lem}\label{lem.4.3}
 Let $z\in D(\partial\phi_{\ep}) 
 := \{z\in D(\phi_{\ep})\ |\ \partial\phi_{\ep}(z)\in V^{*}\} 
 \subset D(\phi_{\ep})$. 
 Then $z^{*} \in \partial\phi_{\ep}(z)$ 
 in $V^{*}$ if and only if $z\in W$ and 
    \begin{equation}\label{epsubdiff1}
    F^{-1}z^{*} 
    = 
    \ep(-\Delta + I)z + \beta(z) + \pi_{\ep}(z) 
    \end{equation}
 Consequently, $\partial\phi_{\ep}$ 
 is single-valued and 
 for all $z\in D(\partial\phi_{\ep})$ 
 it holds that
 \begin{align}
 &z\in W, \quad 
 \ep(-\Delta + I)z + \beta(z) + \pi_{\ep}(z)\in V \quad \notag
 \mbox{and} 
 \\[1mm]
 \label{epsubdiff2}
 &\partial\phi_{\ep}(z) 
 = F(\ep(-\Delta + I)z + \beta(z) + \pi_{\ep}(z)).
 \end{align}
 \end{lem}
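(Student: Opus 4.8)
The plan is to mirror the proof of Lemma~\ref{lem.3.2}, except that the $H^2$-regularity $z\in W$ is extracted from the $H$-space analysis already carried out in Lemma~\ref{lem.4.2} rather than from a fresh elliptic-regularity argument. The bridge between the two settings is the elementary identity $\phi_{\ep}^{H}(w)=\tfrac12|w|_{H}^2+\phi_{\ep}(w)$, valid on the common effective domain $D(\phi_{\ep}^{H})=D(\phi_{\ep})\subset V$, together with the relation $(z^{*},\zeta)_{V^{*}}=(F^{-1}z^{*},\zeta)_{H}$ for $\zeta\in V$, which was established in the proof of Lemma~\ref{lem.3.2} via \eqref{innerVstar}. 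I would state these two facts at the outset.

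For the forward implication I would show that $z^{*}\in\partial\phi_{\ep}(z)$ in $V^{*}$ forces $z+F^{-1}z^{*}\in\partial\phi_{\ep}^{H}(z)$ in $H$. Testing the $V^{*}$-subdifferential inequality against $w\in D(\phi_{\ep})$ and rewriting the left-hand side as $(F^{-1}z^{*},w-z)_{H}$ gives $(F^{-1}z^{*},w-z)_{H}\le\phi_{\ep}(w)-\phi_{\ep}(z)$; adding the elementary bound $(z,w-z)_{H}\le\tfrac12(|w|_{H}^2-|z|_{H}^2)$, which is nothing but $\tfrac12|w-z|_{H}^2\ge0$, and using $\phi_{\ep}^{H}=\tfrac12|\cdot|_{H}^2+\phi_{\ep}$ produces exactly the $H$-subdifferential inequality for $z+F^{-1}z^{*}$. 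Then Lemma~\ref{lem.4.2} does the rest: $z\in D(\partial\phi_{\ep}^{H})\subset W$ by \eqref{inW}, $\beta(z)\in H$ from the domain description \eqref{Dsub2}, and by \eqref{sub1}--\eqref{sub2} the single-valued subdifferential equals $z+\ep(-\Delta+I)z+\beta(z)+\pi_{\ep}(z)$; comparing this with $z+F^{-1}z^{*}$ yields \eqref{epsubdiff1}.

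For the converse, assuming $z\in W$ and \eqref{epsubdiff1}, I would argue directly as in Lemma~\ref{lem.3.2}: since $F^{-1}z^{*}\in V$, I write $(z^{*},w-z)_{V^{*}}=(F^{-1}z^{*},w-z)_{H}$ and estimate the three parts of $\ep(-\Delta+I)z+\beta(z)+\pi_{\ep}(z)$ separately. Green's formula with $\partial_{\nu}z=0$ (legitimate since $z\in W$) converts $\ep(-\Delta z,w-z)_{H}$ into $\ep\int_{\Omega}\nabla z\cdot\nabla(w-z)$, which is bounded by $\tfrac{\ep}{2}\int_{\Omega}(|\nabla w|^2-|\nabla z|^2)$; convexity of $\hat{\beta}$ controls $(\beta(z),w-z)_{H}$ by $\int_{\Omega}(\hat{\beta}(w)-\hat{\beta}(z))$. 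Summing these together with the $\pi_{\ep}$ contribution reproduces $\phi_{\ep}(w)-\phi_{\ep}(z)$, so $z^{*}\in\partial\phi_{\ep}(z)$. Finally, single-valuedness and \eqref{epsubdiff2} are immediate from \eqref{epsubdiff1}, because $z^{*}=F(\ep(-\Delta+I)z+\beta(z)+\pi_{\ep}(z))$ is then uniquely determined and lies in $F(V)=V^{*}$.

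I expect the main difficulty to be the $\pi_{\ep}$ term in the converse: since $\pi_{\ep}$ is anti-monotone, $\hat{\pi_{\ep}}$ is concave and the naive subgradient inequality points the wrong way. The remedy is dictated by (C4): I pair $\ep z$ with $\pi_{\ep}(z)$, so that $\ep z+\pi_{\ep}(z)$ is the derivative at $z$ of the \emph{convex} function $r\mapsto\tfrac{\ep}{2}r^2+\hat{\pi_{\ep}}(r)$, whence $(\ep z+\pi_{\ep}(z),w-z)_{H}\le\tfrac{\ep}{2}\int_{\Omega}(|w|^2-|z|^2)+\int_{\Omega}(\hat{\pi_{\ep}}(w)-\hat{\pi_{\ep}}(z))$. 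The other potential obstacle, the regularity $z\in W$ in the forward direction, is sidestepped entirely by inheriting it from \eqref{inW}.
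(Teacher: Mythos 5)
Your argument is correct, but the mechanism of your forward implication differs from the paper's. The paper first derives the formula for $\partial\phi_{\ep}(z)$ distributionally: it tests the subdifferential inequality with $w=z\pm\lambda\psi$, $\psi\in C_{\mathrm{c}}^{\infty}(\Omega)$, passes to the limit $\lambda\searrow 0$ to obtain $(-\Delta+I)_{{\cal D}'(\Omega)}z=\ep^{-1}(F^{-1}z^{*}-\beta(z)-\pi_{\ep}(z))$, and only then recovers $z\in W$ by writing out the explicit descriptions of $D(\partial\phi_{\ep})$ and $D(\partial\phi_{\ep}^{H})$ and observing that the former is contained in the latter because $V\subset H$. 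You instead transfer the subdifferential inequality itself from $V^{*}$ to $H$: using $(z^{*},w-z)_{V^{*}}=(F^{-1}z^{*},w-z)_{H}$ together with $\phi_{\ep}^{H}=\tfrac12|\cdot|_{H}^{2}+\phi_{\ep}$ and the elementary bound $(z,w-z)_{H}\le\tfrac12(|w|_{H}^{2}-|z|_{H}^{2})$, you get $z+F^{-1}z^{*}\in\partial\phi_{\ep}^{H}(z)$ in one stroke, and then \eqref{inW}, \eqref{sub1}, \eqref{sub2} deliver $z\in W$, $\beta(z)\in H$ and the identification \eqref{epsubdiff1} by single-valuedness of $\partial\phi_{\ep}^{H}=\partial\phi_{H}^{(1)}+\partial\phi_{H}^{(2)}$. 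This buys you a cleaner route: no limiting argument, no distributional intermediate step, and no integrability bookkeeping for difference quotients; the price is that you lean slightly harder on the internal structure of the proof of Lemma \ref{lem.4.2} (the sum decomposition of $\partial\phi_{\ep}^{H}$, not just the statement \eqref{inW}). Your converse, split into the Green's-formula term, the $\hat{\beta}$-convexity term, and the convexity of $r\mapsto\tfrac{\ep}{2}r^{2}+\hat{\pi_{\ep}}(r)$ from (C4), is the natural adaptation of Lemma \ref{lem.3.2} and is in fact spelled out more explicitly than in the paper, which leaves that direction essentially implicit. Both approaches ultimately rest on Lemma \ref{lem.4.2} for the $H^{2}$-regularity, so neither avoids the maximal-monotonicity machinery of Lemmas \ref{pre5} and \ref{pre6}.
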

 \begin{proof}   
 Let $z\in D(\partial\phi_{\ep})$ and 
 $z^{*} \in \partial\phi_{\ep}(z)$. Noting that 
 $D(\partial\phi_{\ep})\subset D(\phi_{\ep})$, 
 we see from the definition of $\partial\phi_{\ep}$ 
 that for all $w \in D(\partial\phi_{\ep})$, 
    \begin{align*}
    (z^{*}, w-z)_{V^{*}} 
    &\leq 
    \frac{\ep}{2}
    \int_{\Omega}\bigl(|w|^2 - |z|^2\bigr) +
    \frac{\ep}{2}
    \int_{\Omega}
          \bigl(|\nabla w|^2 - |\nabla z|^2\bigr) 
    \\ &\quad\ 
    + \int_{\Omega}
    \bigl(\hat{\beta}(w)-\hat{\beta}(z)\bigr) +
    \int_{\Omega}
          \bigl(\hat{\pi_{\ep}}(w)
                            -\hat{\pi_{\ep}}(z)\bigr).
    \end{align*}
 Here, choose $w = z \pm \lambda\psi$ $\lambda > 0$  
 in the above inequality 
 for each $\psi \in {\cal D}(\Omega) := C_{\mathrm{c}}^{\infty}(\Omega)$ and divide 
 the both sides by $\lambda$ and finally pass to 
 the limit $\lambda \searrow 0$. Then 
 for all $\psi \in {\cal D}(\Omega)$, 
 we obtain  
    \begin{align*}
      (z^{*}, \psi)_{V^{*}} 
    &= \ep\left(\int_{\Omega}z\psi 
    + \int_{\Omega}
          \nabla z\cdot\nabla\psi\right) 
    + \int_{\Omega}\beta(z)\psi 
                + \int_{\Omega}
                       \pi_{\ep}(z)\psi. 
    \end{align*} 
 The relation 
 $(z^{*},\ \psi)_{V^{*}} 
                      = \bigl(F^{-1}z^{*},\ \psi)_{H}$ 
 and the arbitrariness of 
 $\psi \in {\cal D}(\Omega)$ 
 yield 
    \begin{align*}
    \int_{\Omega} z(-\Delta + I)\psi
    = 
    \int_{\Omega}
    \frac{F^{-1}z^{*} - \beta(z) - \pi_{\ep}(z)}{\ep}\psi 
    \quad 
    \mbox{for all}\ \psi \in {\cal D}(\Omega).
    \end{align*}
 This implies that 
    \begin{align*}
    (-\Delta+I)_{{\cal D}'(\Omega)}z 
    = 
    \dfrac{F^{-1}z^{*} - \beta(z) - \pi_{\ep}(z)}{\ep} 
    \quad \mbox{in}\ {\cal D}' (\Omega), 
    \end{align*}
 where ${\cal D}' (\Omega)$ is the space of distributions on $\Omega$. 
 Thus we see that 
 \begin{equation*}
 \partial\phi_{\ep}(z) 
 = F\bigl(\ep(-\Delta+I)_{{\cal D}'(\Omega)}z 
          + \beta(z) + \pi_{\ep}(z)\bigr) 
 \quad \mbox{for all}\ z \in D(\partial\phi_{\ep}).
 \end{equation*}
 It suffices from Lemma \ref{lem.4.2} to prove the following 
 inclusion relation: 
 \begin{equation}\label{inclu}
 D(\partial\phi_{\ep}) \subset D(\partial\phi_{\ep}^{H}).
 \end{equation}
 It holds that 
 \begin{align*}
 D(\partial\phi_{\ep}) 
 &= \{w \in D(\phi_{\ep})\ |\ 
                \partial\phi_{\ep}(w) \in V^{*} \} 
 \\[1mm]
 &= \Bigl\{w \in V\ \Bigl|\ 
      \hat{\beta}(w) \in L^1(\Omega),\ 
          F\bigl(\ep(-\Delta+I)_{{\cal D}'(\Omega)}w 
          + \beta(w) + \pi_{\ep}(w)\bigr)\in V^{*} \Bigr\} 
 \\[1mm]
 &= \Bigl\{w \in V\ \Bigl|\ 
      \hat{\beta}(w) \in L^1(\Omega),\ 
          \ep(-\Delta+I)_{{\cal D}'(\Omega)}w 
          + \beta(w) + \pi_{\ep}(w) \in V \Bigr\}
 \end{align*}
 and it follows that 
 \begin{align*}
 D(\partial\phi_{\ep}^{H}) 
 &= \{w \in D(\phi_{\ep}^{H})\ |\ 
                \partial\phi_{\ep}^{H}(w) \in H \} 
 \\[1mm]
 &= \Bigl\{w \in V\ \Bigl|\ 
      \hat{\beta}(w) \in L^1(\Omega),\ 
          w + \ep(-\Delta+I)_{{\cal D}'(\Omega)}w 
          + \beta(w) + \pi_{\ep}(w) \in H \Bigr\}
 \\[1mm]
 &= \Bigl\{w \in V\ \Bigl|\ 
      \hat{\beta}(w) \in L^1(\Omega),\ 
          \ep(-\Delta+I)_{{\cal D}'(\Omega)}w 
          + \beta(w) + \pi_{\ep}(w) \in H \Bigr\}. 
 \end{align*}
 That is, \eqref{inclu} holds.
 \end{proof}
 %
%
\subsection{Proof of Theorem \ref{maintheorem2}}
 
We are now in a position to complete 
the proof of Theorem \ref{maintheorem2}.
 \begin{prth1.2}
 To show existence of weak solutions to \ref{Pep} 
 we consider 
    \begin{equation}\label{Kep}
       \begin{cases}
         u_{\ep}'(t) + \partial\phi_{\ep}(u_{\ep}(t)) = Ff(t) 
         \quad \mbox{in} \ V^{*}
         \quad \mbox{for a.a.}\ t\in[0, T],
         \\[2mm]
         u_{\ep}(0) = u_{0\ep} \quad \mbox{in}\ V^{*}. 
       \end{cases}
    \end{equation}
 In light of Lemma \ref{pre3}, 
 there exists a unique solution $u_{\ep}\in H^1(0, T; V^{*})$ 
 of \eqref{Kep} such that $u_{\ep}(t)\in D(\partial\phi_{\ep})$ 
 for a.a.\ $t\in (0, T)$.
 Putting $\mu_{\ep}(t):=-F^{-1}\bigl(u_{\ep}'(t)\bigr)$, 
 we deduce from \eqref{defF}, \eqref{innerVstar} and \eqref{epsubdiff2} 
 that $\mu_{\ep}\in L^2(0, T; V)$ and 
 $(u_{\ep}, \mu_{\ep})$ satisfies \eqref{de7}-\eqref{de9}. 
 
 Next we show \eqref{epes1}. 
 It follows from the equation in \eqref{Kep} that 
    \begin{align*}
    |u_{\ep}'(s)|_{V^{*}}^2 
    &= \bigl(u_{\ep}'(s), u_{\ep}'(s)\bigr)_{V^{*}} \\
    &= \bigl(u_{\ep}'(s), -\partial\phi_{\ep}(u_{\ep}(s)) + Ff(s)\bigr)_{V^{*}} 
    \\
    &= -\bigl(u_{\ep}'(s), \partial\phi_{\ep}(u_{\ep}(s))\bigr)_{V^{*}} + 
                                     (u_{\ep}'(s), Ff(s))_{V^{*}}. 
    \end{align*}
 Here, we have by Lemma \ref{pre2}, 
    $$
    \bigl(u_{\ep}'(s), \partial\phi_{\ep}(u_{\ep}(s))\bigr)_{V^{*}} 
    = \frac{d}{ds}\phi_{\ep}(u_{\ep}(s)), 
    $$
 and \eqref{innerVstar} and by Young's inequality yield 
    $$
    (u_{\ep}'(s), Ff(s))_{V^{*}} 
    = \langle u_{\ep}'(s), f(s) \rangle_{V^{*}, V} 
    \leq \frac{1}{2}|u_{\ep}'(s)|_{V^{*}}^2 + \frac{1}{2}|f(s)|_{V}^2.
    $$
 Therefore we obtain 
    $$
    \frac{1}{2}|u_{\ep}'(s)|_{V^{*}}^2 
    \leq -\frac{d}{ds}\phi_{\ep}(u_{\ep}(s)) + \frac{1}{2}|f(s)|_{V}^2.
    $$
 Integrating this inequality yields
    $$
    \frac{1}{2}\int_{0}^{t}|u_{\ep}'(s)|_{V^{*}}^2\,ds 
    \leq -\phi_{\ep}(u_{\ep}(t)) + \phi_{\ep}(u_{0\ep}) + 
                           \frac{1}{2}|f|_{L^2(0, T; V)}^2,
    $$
 i.e., 
    \begin{align*}
    &\frac{1}{2}\int_{0}^{t}|u_{\ep}'(s)|_{V^{*}}^2\,ds + 
    \frac{\ep}{2}|u_{\ep}(t)|_{V}^2 + 
    \int_{\Omega}\hat{\beta}(u_{\ep}(t)) + \int_{\Omega}\hat{\pi_{\ep}}(u_{\ep}(t)) 
    \\
    &\leq 
    \frac{\ep}{2}|u_{0\ep}|_{V}^2 + 
    \int_{\Omega}\hat{\beta}(u_{0\ep}) + \int_{\Omega}\hat{\pi_{\ep}}(u_{0\ep}) 
    + \frac{1}{2}|f|_{L^2(0, T; V)}^2.
    \end{align*}
 Here (C1) implies 
    \begin{equation}\label{KY2_y1}
    \int_{\Omega}\hat{\beta}(u_{\ep}(t)) \geq c_{1}|u_{\ep}(t)|_{H}^2. 
    \end{equation}
 Recalling \eqref{a3ineq}, we infer that 
     \begin{equation}\label{KY2_y2}
    |\hat{\pi_{\ep}}(r)| 
    \leq 
    \frac{1}{2}|\pi_{\ep}'|_{L^{\infty}(\mathbb{R})}|r|^2
    \leq 
    \frac{1}{2}c_2(\ep)|r|^2
    \end{equation}
 for all $r\in\mathbb{R}$. Now, from (C4) we deduce that 
 there exists $\overline{\ep}\in(0, 1]$ such that 
 $c_2(\ep) <2c_1$ 
 for all $\ep\in(0, \overline{\ep}]$. Thus combining \eqref{KY2_y1} and  
 \eqref{KY2_y2} gives
    $$
    \int_{\Omega}\hat{\beta}(u_{\ep}(t)) 
    +\int_{\Omega}\hat{\pi_{\ep}}(u_{\ep}(t)) 
    \geq 
    \frac{1}{2}\bigl(2c_{1}-c_2(\ep)\bigr)|u_{\ep}(t)|_{H}^2
    $$
 for a.a. $t\in(0, T)$. Moreover, using \eqref{a4ineq} 
 of (C5) leads to 
    \begin{align*}
    \frac{\ep}{2}|u_{0\ep}|_{V}^2 + 
    \int_{\Omega}\hat{\beta}(u_{0\ep}) + 
    \int_{\Omega}\hat{\pi_{\ep}}(u_{0\ep}) 
    &\leq 
    \frac{c_{3}(\ep)}{2} + c_{3}(\ep) + 
    \frac{1}{2}c_2(\ep)|u_{0\ep}|_{H}^2
    \\
    &\leq \frac{3}{2}c_3(\ep) + \frac{1}{2}c_2(\ep)c_3(\ep).
    \end{align*}
 Therefore we see that 
    \begin{align*}
    \int_{0}^{t}|u_{\ep}'(s)|_{V^{*}}^2\,ds 
    + \ep|u_{\ep}(t)|_{V}^2 + 
    \bigl(2c_{1}-c_2(\ep)\bigr)|u_{\ep}(t)|_{H}^2
    \leq 
    3c_3(\ep) + c_2(\ep)c_3(\ep) + |f|_{L^2(0, T; V)}^2.
    \end{align*}
 This implies \eqref{epes1} 
 with $M_2(\ep) := 3c_3(\ep) + c_2(\ep)c_3(\ep) + |f|_{L^2(0, T; V)}^2$.

 Next we prove \eqref{epes2}.
 Since $\mu_{\ep}(s) = -F^{-1}\bigl(u_{\ep}'(s)\bigr)$, 
 it follows that 
 \begin{align*}
 \int_{0}^{t}|\mu_{\ep}(s)|_{V}^2\,ds 
 = \int_{0}^{t}
           \bigl|-F^{-1}\bigl(u_{\ep}'(s)\bigr)\bigr|_{V}^2\,ds 
 = \int_{0}^{t}\bigl|u_{\ep}'(s)\bigr|_{V^{*}}^2\,ds. 
 \end{align*}
 Therefore we arrive at \eqref{epes2} via \eqref{epes1}.
 
 Next we show \eqref{epes3}. 
 Noting 
 by Lemma \ref{lem.4.3} that $u_{\ep}(s)\in W$ 
 for a.a.\ $s\in (0, T)$ and 
 recalling the definition of $\mu(\cdot)$, 
 the monotonicity of $\beta$ and Lemma \ref{pre5}, we have
    \begin{align*}
     |\beta(u_{\ep}(s))|_{H}^2 
    &= \bigl(\beta(u_{\ep}(s)), \beta(u_{\ep}(s))\bigr)_{H} 
    \\[1mm]
    &= \bigl(\mu_{\ep}(s) - \ep(-\Delta + I)u_{\ep}(s) -\pi_{\ep}(u_{\ep}(s)) + f(s), 
    \beta(u_{\ep}(s))\bigr)_{H} 
    \\[1mm]
    &= \bigl(\mu_{\ep}(s) - \pi_{\ep}(u_{\ep}(s)) + f(s), \beta(u_{\ep}(s))\bigr)_{H} 
    \\
    &\quad\ - \ep\bigl(-\Delta u_{\ep}(s), \beta(u_{\ep}(s))\bigr)_{H}
     - \ep\bigl(u_{\ep}(s), \beta(u_{\ep}(s))\bigr)_{H}
    \\[1mm]
    &\leq(|\mu_{\ep}(s)|_{V} + |\pi_{\ep}(u_{\ep}(s))|_{H} + |f(s)|_{V})|\beta(u_{\ep}(s))|_{H},  
    \end{align*}
 where Young's inequality and (C4) yield 
    \begin{align*}
    &\bigl(|\mu_{\ep}(s)|_{V} + |\pi_{\ep}(u_{\ep}(s))|_{H} + |f(s)|_{V}\bigr)|
                                                                               \beta(u_{\ep}(s))|_{H} 
    \\
    &\leq 
    \frac{1}{2}(|\mu_{\ep}(s)|_{V} + 
    |\pi_{\ep}'|_{L^{\infty}(\mathbb{R})}|u_{\ep}(s)|_{H} + |f(s)|_{V})^2 + 
    \frac{1}{2}|\beta(u_{\ep}(s))|_{H}^2 \\
    &\leq \frac{3}{2}\bigl(|\mu_{\ep}(s)|_{V}^2 + c_2(\ep)^2|u_{\ep}(s)|_{H}^2 
             + |f(s)|_{V}^2\bigr) + \frac{1}{2}|\beta(u_{\ep}(s))|_{H}^2.
    \end{align*}
 Therefore,  
    $$
    \int_{0}^{t}|\beta(u_{\ep}(s))|_{H}^2\,ds 
    \leq 3\Bigl(\int_{0}^{t}|\mu_{\ep}(s)|_{V}^2\,ds + 
    c_2(\ep)^2\int_{0}^{t}|u_{\ep}(s)|_{H}^2\,ds + 
    |f|_{L^2(0, T; V)}^2\Bigr).
    $$
 Thus we obtain \eqref{epes3} by virtue of \eqref{epes1} and \eqref{epes2}. 

 Next we show \eqref{epes4}. 
 It follows from \eqref{de8} that 
    \begin{align*}
    &\int_{0}^{t}|\ep(-\Delta + I)u_{\ep}(s)|_{H}^2\,ds \\
    &= \int_{0}^{t}|\mu_{\ep}(s) - \beta(u_{\ep}(s)) - \pi_{\ep}(u_{\ep}(s)) + f(s)|_{H}^2\,ds 
    \\
    &\leq 
    4\Bigl(\int_{0}^{t}|\mu_{\ep}(s)|_{V}^2\,ds 
    + \int_{0}^{t}|\beta(u_{\ep}(s))|_{H}^2\,ds 
    + \int_{0}^{t}|\pi_{\ep}(u_{\ep}(s))|_{H}^2\,ds 
    + \int_{0}^{t}|f(s)|_{V}^2\,ds\Bigr) \\
    &\leq 
    16\Bigl(\int_{0}^{t}|\mu_{\ep}(s)|_{V}^2\,ds  
    + c_2(\ep)^2\int_{0}^{t}|u_{\ep}(s)|_{H}^2\,ds 
    + |f|_{L^2(0, T; V)}^2 \Bigr).
    \end{align*}
 Hence, by the standard elliptic regularity estimate that 
 there exists a constant $L > 0$ such that 
 $|w|_{W} \leq L|(-\Delta + I)w|_{H}$ for all $w \in W$, we infer 
    \begin{align*}
    \int_{0}^{t}|\ep u_{\ep}(s)|_{W}^2\,ds 
    &\leq 
    16L^2\Bigl(\int_{0}^{t}|\mu_{\ep}(s)|_{V}^2\,ds  
    + c_2(\ep)^2\int_{0}^{t}|u_{\ep}(s)|_{H}^2\,ds 
    + |f|_{L^2(0, T; V)}^2 \Bigr)
    \end{align*}
 for all $t\in[0, T]$.
 Therefore \eqref{epes4} follows from \eqref{epes1} and \eqref{epes2}. 
  
 Moreover, we see from \eqref{epes1} and \eqref{epes4} 
 that $u_{\ep}\in L^{\infty}(0, T; V)$ and 
 $u_{\ep}\in L^2(0, T; W)$, respectively.
 \qed
 \end{prth1.2}
 \section{Error estimates}\label{Sec5}

Regarding \ref{Pep} as approximate problems of \ref{P} 
as $\ep \searrow 0$,  we can obtain the following theorem 
which gives an information about the error estimate 
between the solution of \ref{P} and the solution of \ref{Pep}. 
Our proof is based on a direct estimate and hence 
it is simpler than that in \cite{CF-2016}.
 \begin{thm}\label{maintheorem3}
 In {\rm (C4)} and {\rm (C5)} assume further that 
 \begin{equation}\label{teisuu}
 c_2(\ep)=\tilde{c_2}\ep, \quad c_3(\ep) \equiv \tilde{c_3}
 \end{equation}
 and 
 \begin{equation}\label{shoki}
 |u_{0\ep}-u_{0}|_{V^{*}} \leq c_{4}\ep^{1/4}
 \end{equation}
 for some constants $\tilde{c_2}$, $\tilde{c_3}$ and $c_{4} > 0$ 
 independent of $\ep$. 
 Let $(u_{\ep}, \mu_{\ep})$ and $(u, \mu)$
 be weak solutions of {\rm \ref{Pep}} and {\rm \ref{P}}, 
 respectively.
 Then there exist constants $C^{*}>0$ and 
 $\overline{\ep}\in(0, 1]$, independent of $\ep$, 
 such that
     \begin{equation}\label{eres}
        |u_{\ep}-u|^2_{C([0, T]; V^{*})} + 
        \int_{0}^{T} 
        \bigl(\beta(u_{\ep}(s)) - \beta(u(s)), 
                              u_{\ep}(s) - u(s)\bigr)_{H}
        \,ds 
        \leq C^{*}\ep^{1/2}
     \end{equation}
 for all $\ep \in (0, \overline{\ep}]$.
 \end{thm}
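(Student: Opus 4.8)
The plan is to reduce \ref{P} and \ref{Pep} to the evolution form used in the existence proofs and then run a single energy estimate on the difference. Since the definitions of weak solution give $\langle u'(t),z\rangle_{V^{*},V}+(\mu(t),z)_V=0$ and $\langle u_\ep'(t),z\rangle_{V^{*},V}+(\mu_\ep(t),z)_V=0$ for all $z\in V$, recalling \eqref{defF} these read $u'(t)=-F\mu(t)$ and $u_\ep'(t)=-F\mu_\ep(t)$ in $V^{*}$. Subtracting and testing the difference with $z=F^{-1}(u_\ep(t)-u(t))\in V$, and using \eqref{innerVstar} together with $u_\ep(t)-u(t)\in H^1(0,T;V^{*})$, I would obtain for a.a.\ $t$ the identity
\[
\tfrac12\tfrac{d}{dt}\bigl|u_\ep(t)-u(t)\bigr|_{V^{*}}^2+\bigl(\mu_\ep(t)-\mu(t),\,u_\ep(t)-u(t)\bigr)_H=0,
\]
where I have used that $u_\ep(t)-u(t)\in H$, so the duality pairing $\langle\,\cdot\,,u_\ep-u\rangle_{V^{*},V}$ collapses to the $H$-inner product.

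Next I would insert \eqref{de5} and \eqref{de8}, so that $\mu_\ep-\mu=\ep(-\Delta+I)u_\ep+(\beta(u_\ep)-\beta(u))+\pi_\ep(u_\ep)$, and split the inner product into three pieces. The monotonicity of $\beta$ makes $(\beta(u_\ep)-\beta(u),u_\ep-u)_H\ge0$, and this is precisely the quantity kept on the left-hand side of \eqref{eres}. For the regularization piece I would integrate by parts using $\partial_\nu u_\ep=0$, writing $\ep((-\Delta+I)u_\ep,u_\ep-u)_H=\ep(u_\ep,u_\ep-u)_V$, and then apply Young's inequality to get the lower bound $\ep(u_\ep,u_\ep-u)_V\ge\tfrac{\ep}{2}|u_\ep|_V^2-\tfrac{\ep}{2}|u|_V^2$. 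For the $\pi_\ep$ piece I would use (C4), giving $|\pi_\ep(u_\ep)|_V\le c_2(\ep)|u_\ep|_V=\tilde{c_2}\,\ep|u_\ep|_V$, together with \eqref{epes1} and the scaling \eqref{teisuu}, which yield the crucial bound $\sqrt\ep\,|u_\ep(t)|_V\le\sqrt{M_2(\ep)}\le C$; hence $|\pi_\ep(u_\ep)|_V\le C\sqrt\ep$ and $|(\pi_\ep(u_\ep),u_\ep-u)_H|\le|u_\ep-u|_{V^{*}}|\pi_\ep(u_\ep)|_V\le\tfrac12|u_\ep-u|_{V^{*}}^2+C\ep$.

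Collecting these bounds (and discarding the nonnegative term $\tfrac{\ep}{2}|u_\ep|_V^2$), I would arrive at a differential inequality of the form $\tfrac12\tfrac{d}{dt}|u_\ep-u|_{V^{*}}^2+(\beta(u_\ep)-\beta(u),u_\ep-u)_H\le\tfrac{\ep}{2}|u|_V^2+\tfrac12|u_\ep-u|_{V^{*}}^2+C\ep$. Integrating in time, the initial term is $|u_{0\ep}-u_0|_{V^{*}}^2\le c_4^2\,\ep^{1/2}$ by \eqref{shoki}, the term $\tfrac{\ep}{2}\int_0^t|u|_V^2$ is $O(\ep)$, and a Gronwall argument absorbs $\tfrac12\int_0^t|u_\ep-u|_{V^{*}}^2$; this yields \eqref{eres} with some $C^{*}>0$, the dominant $\ep^{1/2}$ coming from the initial data.

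The step I expect to be the main obstacle is the treatment of the regularization term. The factor $\ep(-\Delta+I)u_\ep$ is only $O(1)$ in $L^2(0,T;H)$, so a direct $H$-estimate of $\ep((-\Delta+I)u_\ep,u)_H$ produces no smallness; the gain of the half power of $\ep$ appears only after the integration by parts, which lets one pair the controlled quantity $\sqrt\ep\,|u_\ep|_V=O(1)$ against $u$ in $V$. Consequently the argument hinges on the regularity $u\in L^2(0,T;V)$ of the limit solution, which I would extract from $\beta(u)\in L^2(0,T;V)$ (guaranteed by \eqref{es3}) together with the coercivity $\hat{\beta}(r)\ge c_1|r|^2$ of (C1); securing this regularity and justifying the integration by parts—the boundary term vanishing because $u_\ep\in W$—is the delicate point, after which the remaining estimates are routine.
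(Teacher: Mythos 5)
Your overall strategy coincides with the paper's: differentiate $|u_\ep-u|_{V^*}^2$, convert the duality pairing to the $H$-inner product, keep the monotone term $(\beta(u_\ep)-\beta(u),u_\ep-u)_H$ on the left, and let the initial data supply the $\ep^{1/2}$. The treatment of the initial term and of the $\pi_\ep$-term is fine. The gap is exactly where you anticipated it: your handling of $\ep((-\Delta+I)u_\ep,u_\ep-u)_H$ requires $u\in L^2(0,T;V)$, both to justify integrating by parts onto $u$ and to make $\frac{\ep}{2}\int_0^T|u|_V^2$ finite, and this regularity is not available. Theorem \ref{maintheorem1} gives only $u\in L^\infty(0,T;H)$ and $\beta(u)\in L^2(0,T;V)$, and your proposed deduction of $u\in L^2(0,T;V)$ from these fails in general: the coercivity $\hat{\beta}(r)\ge c_1|r|^2$ in (C1) yields $\beta(r)r\ge\hat{\beta}(r)\ge c_1r^2$ and hence the pointwise bound $|u|\le c_1^{-1}|\beta(u)|$, but it is a growth condition at infinity and does not prevent $\beta$ from being locally constant; since $\nabla\beta(u)=\beta'(u)\nabla u$, you get no control of $\nabla u$ by $\nabla\beta(u)$ unless $\beta'$ is bounded below (true for the examples $\beta(r)=|r|^{q-1}r+r$, but not for general $\beta$ satisfying (C1)).

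The paper's way around this is the idea your argument is missing. After discarding the nonnegative contribution $(u_\ep,\ \ep(-\Delta+I)u_\ep+\pi_\ep(u_\ep))_H\ge0$ (monotonicity of $r\mapsto\ep r+\pi_\ep(r)$ together with $\ep(-\Delta u_\ep,u_\ep)_H\ge0$, legitimate since $u_\ep\in W$), one is left with $B_\ep(t)=\ep\int_0^t(u,(-\Delta+I)u_\ep)_H\,ds$, estimated by Cauchy--Schwarz as $\ep^{1/2}\bigl(\ep\int_0^t|(-\Delta+I)u_\ep|_H^2\,ds\bigr)^{1/2}\bigl(\int_0^t|u|_H^2\,ds\bigr)^{1/2}$, which uses only $u\in L^2(0,T;H)$. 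The extra ingredient is the uniform-in-$\ep$ bound $\ep\int_0^T|(-\Delta+I)u_\ep|_H^2\,ds\le C_2$, obtained by testing \eqref{de8} against $(-\Delta+I)u_\ep$ and discarding the sign-definite terms $(\beta(u_\ep),-\Delta u_\ep)_H\ge0$ (Lemma \ref{pre5}) and $(\beta(u_\ep),u_\ep)_H\ge0$. If you either restrict to $\beta$ with $\beta'\ge\delta>0$ (so that $u\in L^2(0,T;V)$ genuinely follows from \eqref{es3}) or replace your integration by parts with this auxiliary estimate, your argument closes.
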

\smallskip

 \begin{prth5.1} 
 Under the additional condition \eqref{teisuu} we have from  
 Theorems \ref{maintheorem1} and \ref{maintheorem2} 
 that 
 there exist constants $M_{1} > 0$, $M_{2} > 0$ 
 and $\overline{\ep}\in(0, 1]$, independent of $\ep$, 
 such that $2c_{1}-\tilde{c_2}\overline{\ep} \ge c_1$ and 
    \begin{align}
    &\int_{0}^{T}|u'(s)|^2_{V*}\,ds + 
     2c_{1}|u(t)|_{H}^2 \leq M_{1}, \label{apri}
    \\ 
    &\int_{0}^{T}|u_{\ep}'(s)|^2_{V*}\,ds + 
     \ep|u_{\ep}(t)|_{V}^2 + c_1|u_{\ep}(t)|_{H}^2 \leq M_{2} \label{epapri}
    \end{align}
 for all $t\in[0,T]$ and $\ep\in(0, \overline{\ep}]$. 
 Now we see from \eqref{de4} and \eqref{de7} that 
 \begin{align*} 
 \frac{1}{2}\frac{d}{ds}|u_{\ep}(s)-u(s)|_{V^{*}}^2 
 &= (u_{\ep}(s)-u(s), u_{\ep}'(s)-u'(s))_{V^{*}} \\
 &= \bigl\langle u_{\ep}(s)-u(s), 
       F^{-1}u_{\ep}'(s)-F^{-1}u'(s) \bigr\rangle_{V^{*}, V} \\
 &= -\langle u_{\ep}(s)-u(s), 
                 \mu_{\ep}(s)-\mu(s) \rangle_{V^{*}, V}.
 \end{align*}
 Here, since 
 $u_{\ep}\in L^{\infty}(0, T; V)$, 
 $u\in L^{\infty}(0, T; H)$ 
 and $\mu_{\ep}, \mu\in L^2(0, T; V)$, 
 we derive 
 \begin{align*}
 \langle u_{\ep}(s)-u(s), 
               \mu_{\ep}(s)-\mu(s) \rangle_{V^{*}, V} 
 = (u_{\ep}(s)-u(s), \mu_{\ep}(s)-\mu(s))_{H}.
 \end{align*}
 Thus, by \eqref{de8} and 
 since the function $r \mapsto \ep r + \pi_{\ep}(r)$ 
 is a monotone increasing function, it follows that 
 \begin{align*}
 &\frac{1}{2}\frac{d}{ds}|u_{\ep}(s)-u(s)|_{V^{*}}^2 
         + \bigl(u_{\ep}(s)-u(s), \beta(u_{\ep}(s))-\beta(u(s))\bigr)_{H}
 \\
 &= -\bigl(u_{\ep}(s)-u(s), 
         \ep(-\Delta+I)u_{\ep}(s) + \pi_{\ep}(u_{\ep}(s))\bigr)_{H}
 \\[1mm]
 &\leq \ep\bigl(u(s), (-\Delta+I)u_{\ep}(s)\bigr)_{H} 
          + \bigl(u(s), \pi_{\ep}(u_{\ep}(s))\bigr)_{H}.
 \end{align*}
 Integrating this inequality yields 
 \begin{align}
 &\frac{1}{2}|u_{\ep}(t)-u(t)|_{V^{*}}^2 + 
  \int_{0}^{t}
  \bigl(u_{\ep}(s)-u(s), \beta(u_{\ep}(s))-\beta(u(s))\bigr)_{H}\,ds \label{u and uep}
 \\ \notag
 &\leq \frac{1}{2}|u_{0\ep}-u_{0}|_{V^{*}}^2 + 
  \ep\int_{0}^{t} \bigl(u(s), (-\Delta + I)u_{\ep}(s)\bigr)_{H}\,ds 
  + \int_{0}^{t}\bigl(u(s), \pi_{\ep}(u_{\ep}(s))\bigr)_{H}\,ds 
 \\ \notag
 &=: A(\ep) + B_{\ep}(t) + C_{\ep}(t).
 \end{align}
 From \eqref{shoki} we have
 \begin{equation}\label{A}
 A(\ep) \leq \frac{1}{2}c_4^2\ep^{1/2}.
 \end{equation}
 By \eqref{a3ineq}, \eqref{teisuu}, \eqref{apri} and \eqref{epapri} there exist 
 a constant 
 $C_{1} > 0$ such that 
 \begin{align}
 C_{\ep}(t) \label{C} 
 \leq 
 \tilde{c_{2}}\ep\int_{0}^{t}|u_{\ep}(s)|_{H}|u(s)|_{H}\,ds 
 \leq 
 \tilde{c_{2}}\sqrt{\frac{M_{2}}{c_{1}}}\sqrt{\frac{M_{1}}{2c_{1}}}T\ep 
 \leq C_{1}\ep.
 \end{align}
 Moreover, Schwarz's inequality and \eqref{apri} give 
 \begin{align*}
 B_{\ep}(t) 
 &\leq 
 \ep^{1/2}\Bigl(\ep\int_{0}^{t}
 |(-\Delta + I)u_{\ep}(s)|_{H}^2\,ds \Bigr)^{1/2}
 \Bigl(\int_{0}^{t}|u(s)|_{H}^2\,ds \Bigr)^{1/2} 
 \\
 &\leq \sqrt{\frac{M_{1}T}{2c_{1}}}
               \Bigl(\ep\int_{0}^{t}
                |(-\Delta + I)u_{\ep}(s)|_{H}^2\,ds \Bigr)^{1/2}\ep^{1/2}
 \end{align*}
 Here, by \eqref{de7}, \eqref{de8}, Young's inequality, \eqref{Ffg},
 \eqref{apri} and \eqref{epapri}, it follows that 
 \begin{align*}
 \ep|(-\Delta + I)u_{\ep}(s)|_{H}^2 
 &= \bigl(\mu_{\ep}'(s),\ (-\Delta + I)u_{\ep}(s)\bigr)_{H} 
   -\bigl(\beta(u_{\ep}(s)),\ (-\Delta + I)u_{\ep}(s)\bigr)_{H} \\
 &\quad\ -\bigl(\pi_{\ep}(u_{\ep}(s)),\ (-\Delta + I)u_{\ep}(s)\bigr)_{H} 
   + \bigl(f(s),\ (-\Delta + I)u_{\ep}(s)\bigr)_{H} 
 \\[1mm]
 &= -\langle u_{\ep}'(s),\ u_{\ep}(s) \rangle_{V^{*}, V} 
   -\bigl(\beta(u_{\ep}(s)),\ -\Delta u_{\ep}(s)\bigr)_{H} \\
 &\quad\ -\bigl(\beta(u_{\ep}(s)),\ u_{\ep}(s)\bigr)_{H} 
   -\bigl(\pi_{\ep}(u_{\ep}(s)),\ u_{\ep}(s)\bigr)_{V} 
                                   +(g(s),\ u_{\ep}(s))_{H} \\
 &\leq -\frac{1}{2}\frac{d}{ds}|u_{\ep}(s)|_{H}^2 
       + \tilde{c_{2}}\ep|u_{\ep}(s)|_{V}^2 
       + \frac{1}{2}|g(s)|_{H}^2 
       + \frac{1}{2}|u_{\ep}(s)|_{H}^2 \\
 &\leq -\frac{1}{2}\frac{d}{ds}|u_{\ep}(s)|_{H}^2 
       + \tilde{c_{2}}M_{2} 
       + \frac{1}{2}|g(s)|_{H}^2 + \frac{M_{2}}{2c_{1}},
 \end{align*}
 and hence there exists a constant $C_{2} > 0$ 
 such that 
 \begin{align*}
 \ep\int_{0}^{t}|(-\Delta + I)u_{\ep}(s)|_{H}^2\,ds 
 \leq 
      \frac{1}{2}|u_{0\ep}|_{H}^2 
      +\tilde{c_{2}}M_{2}T 
      + \frac{1}{2}|g|_{L^2(0, T; H)}^2 
      + \frac{M_{2}}{2c_{1}}T 
 \leq C_{2}.
 \end{align*}
 Thus, there exists a constant $C_{3} > 0$ such that 
 \begin{align}\label{B}
 B_{\ep}(t) \leq C_{3}\ep^{1/2}.
 \end{align}
 Plugging \eqref{A}, \eqref{C} and \eqref{B} into \eqref{u and uep}, we have 
 \begin{align*}
 |u_{\ep}-u|_{C([0, T]; V^{*})}^2 \leq 
 c_{4}^2\ep^{1/2} + 2C_{3}\ep^{1/2} + 2C_{1}\ep, 
 \end{align*}
 and
 \begin{align*}
 \int_{0}^{T}
 \bigl(u_{\ep}(s)-u(s),\ \beta(u_{\ep}(s))-\beta(u(s))\bigr)_{H}\,ds 
 \leq \frac{1}{2}c_{4}^2\ep^{1/2} + C_{3}\ep^{1/2} + C_{1}\ep, 
 \end{align*}
 that is, there exists $C^{*} > 0$ such that 
 the error estimate \eqref{eres} holds. 
 \qed
 \end{prth5.1}
 
 \section{Examples}\label{Sec6}
 
 In this section we apply 
 Theorems \ref{maintheorem1}, \ref{maintheorem2} and \ref{maintheorem3} 
 to the following two examples.

\begin{ex1}
We consider 
$$
\beta (r) = |r|^{q-1}r + r \quad (q > 1), \qquad 
\pi_{\ep}(r) = -\ep r.           
$$         
This $\beta$ is the function obtained by adding 
the correction term $r$ to $|r|^{q-1}r$ in 
the porous media equation 
 (see, e.g., \cite{ASS-2016, M-2010, V-2007, Y-2008}). 
On the other hand, $\pi_{\ep}$ is the function 
appearing in Cahn--Hilliard type equations.
\end{ex1}
\begin{ex2}
Consider
$$
\beta(r) = |r|^{q-1}r + r \quad (0 < q < 1), \qquad 
\pi_{\ep}(r) = -\ep r
$$
This $\beta$ is the function obtained by adding 
the correction term $r$ to $|r|^{q-1}r$ in 
the fast diffusion equation 
 (see, e.g., \cite{B-1983, RV-2002, V-2007}). 
\end{ex2}

In both examples we can show that $\beta$ and $\pi_{\ep}$ 
 satisfy (C1), (C4) and (C5) as follows. 
Let $q>0$. Since 
 $$
 \beta(r) = |r|^{q-1}r + r 
 = \hat{\beta}\,'(r) 
 = \partial\hat{\beta}(r), 
 $$
 where $\hat{\beta}(r):=\frac{1}{q+1}|r|^{q+1} 
                                  + \frac{1}{2}|r|^2$, 
 we see that (C1) is satisfied. 

 Next it follows that $\pi_{\ep}(r) = -\ep r$ is 
 Lipschitz continuous and 
 \begin{align*}
 &\pi_{\ep}'(r) = -\ep, 
 \\
 &\dfrac{\ep}{2}r^2 + \hat{\pi_{\ep}}(r) 
 = \dfrac{\ep}{2}r^2 + \int_{0}^{r}\pi_{\ep}(s)\,ds 
 = \dfrac{\ep}{2}r^2 - \dfrac{\ep}{2}r^2 
 = 0. 
 \end{align*}
 Hence (C4) holds. 

 To verify (C5) 
 we assume (C3), 
 i.e., 
 $u_{0} \in L^2(\Omega)\cap L^{q+1}(\Omega)$. 
 Then we put 
 \begin{align*}
 &A_{L^2}:=-\Delta+I 
 : D(A_{L^2}):=W \subset L^2(\Omega) \to L^2(\Omega), 
 \\[1mm]
 &(J_{L^2})_{\lambda}:=(I+\lambda A_{L^2})^{-1}, 
 \\[1mm]
 &A_{L^{q+1}}:=-\Delta+I 
 : D(A_{L^{q+1}}):=Y \subset L^{q+1}(\Omega) \to L^{q+1}(\Omega), 
 \\[1mm]
 &(J_{L^{q+1}})_{\lambda}:=(I+\lambda A_{L^{q+1}})^{-1}, 
 \end{align*}
 where 
 $Y:=\bigl\{z \in W^{2,\,q+1}(\Omega)\ |\ \partial_{\nu}z = 0 
 \quad \mbox{a.e.\ on}\ \partial\Omega \bigr\}$. 
 There exists $u_{0\ep} \in W\cap Y$ such that 
    \begin{equation*}
       \begin{cases}
         u_{0\ep} + \ep(-\Delta + 1)u_{0\ep} = u_{0} 
         \quad \mbox{in}\ \Omega,
         \\[2mm]
         \partial_{\nu}u_{0\ep} = 0 
         \quad \mbox{on}\ \partial\Omega, 
       \end{cases}
    \end{equation*}
 that is, 
 $$u_{0\ep} = (J_{L^2})_{\ep}u_{0} = (J_{L^{q+1}})_{\ep}u_{0}.$$ 
 From the properties of 
 $(J_{L^2})_{\ep}$ and $(J_{L^{q+1}})_{\ep}$ we have 
  \begin{align*}
  &u_{0\ep} = (J_{L^2})_{\ep}u_{0} \to u_{0} 
  \quad \mbox{in}\ L^{2}(\Omega)\ \mbox{as}\ \ep \searrow 0, 
  \\[1mm]
  &|u_{0\ep}|_{L^2(\Omega)} 
                      = |(J_{L^2})_{\ep}u_{0}|_{L^2(\Omega)} 
  \leq |u_{0}|_{L^2(\Omega)}, 
  \\[1mm]
  &\|u_{0\ep}\|_{L^{q+1}(\Omega)} 
                = \|(J_{L^{q+1}})_{\ep}u_{0}\|_{L^{q+1}(\Omega)} 
  \leq \|u_{0}\|_{L^{q+1}(\Omega)}, 
  \end{align*}
  and hence 
  \begin{align}
  \int_{\Omega}\hat{\beta}(u_{0\ep}) \notag
  = \dfrac{1}{q+1}\|u_{0\ep}\|_{L^{q+1}(\Omega)}^{q+1} 
       + \dfrac{1}{2}|u_{0\ep}|_{L^{2}(\Omega)}^2
  \leq \dfrac{1}{q+1}\|u_{0}\|_{L^{q+1}(\Omega)}^{q+1} 
       + \dfrac{1}{2}|u_{0}|_{L^{2}(\Omega)}^2, 
  \\[2mm]
  \ep|u_{0\ep}|_{H^1(\Omega)}^2     \label{6.1}
  = \bigl(\ep(-\Delta+I)u_{0\ep}, u_{0\ep}\bigr)_{L^{2}(\Omega)} 
  = (u_{0}-u_{0\ep}, u_{0\ep})_{L^{2}(\Omega)} 
  \leq |u_{0}|_{L^{2}(\Omega)}^2. 
  \end{align}
 Hence there exists $u_{0\ep}$ satisfying (C5). 
 Moreover, we observe that 
 $$
 |u_{0\ep}-u_{0}|_{H^{-1}(\Omega)} \leq \ep^{1/2}|u_{0}|_{L^{2}(\Omega)}.
 $$ 
 Indeed, it follows from \eqref{6.1} that 
 $$
 |u_{0\ep}-u_{0}|_{H^{-1}(\Omega)}^2 
 = |\ep(-\Delta+I)u_{0\ep}|_{H^{-1}(\Omega)}^2 
 = \ep^2|Fu_{0\ep}|_{H^{-1}(\Omega)}^2 
 = \ep^2|u_{0\ep}|_{H^1(\Omega)}^2 
 \leq \ep|u_{0}|_{L^{2}(\Omega)}^2.
 $$
 Finally, letting $g \in L^2\bigl(0,T; L^2(\Omega)\bigr)$, 
 we find a function $f \in L^2(0, T; H^2(\Omega))$ satisfying (C2). 
 From the above, (C1), (C2), (C4) and (C5) hold and 
 we obtain 
 Theorems \ref{maintheorem1}, \ref{maintheorem2} and \ref{maintheorem3} 
 for the functions $\beta$ and $\pi_{\ep}$ in Examples 6.1 and 6.2.

\begin{remark}
In this paper, since the increasing condition of $\hat{\beta}$ is quadratic, we can only 
deal with 
the case of 
nondegenerate diffusion terms 
adding the correction term $u$ to $\beta(u)$. 
We can exclude such the correction term by translation with a constant 
when $\Omega$ is bounded; 
however, we cannot do it when $\Omega$ is unbounded. 
By revising the increasing condition of $\hat{\beta}$ 
with the $m$-th power ($m > 1$), we can deal with the pure porous media equation and the pure fast diffusion equation; however, it is 
delicate (cf.\ \cite{KY-2016}).
\end{remark}

%


\begin{thebibliography}{99}
 \bibitem{ASS-2016}
    G. Akagi, G. Schimperna, A. Segatti, 
    {\it Fractional Cahn--Hilliard, 
    Allen--Cahn and porous medium equations}, 
    J. Differential Equations {\bf 261} (2016), 
    2935--2985. 
 \bibitem{Barbu2}
    V. Barbu, 
    ``Nonlinear semigroups and 
    differential equations in Banach spaces'', 
    Translated from the Romanian, 
    Editura Academiei Republicii Socialiste Rom\^ania, 
    Bucharest; Noordhoff International Publishing, 
    Leiden, 1976.
 \bibitem{Barbu1}
    V. Barbu, 
    ``Nonlinear Differential Equations 
    of Monotone Types in Banach Spaces'', 
    Springer, London, 2010. 
 \bibitem{BP-2005}
    D. Blanchard, A. Porretta, 
    {\it Stefan problems with 
    nonlinear diffusion and convection}, 
    J. Differential Equations {\bf 210} (2005), 383--428. 
 \bibitem{Brezis}
    H. Br\'ezis, ``Op\'erateurs Maximaux Monotones et 
    Semi-groupes de Contractions 
    dans les Especes de Hilbert'', 
    North-Holland, Amsterdam, 1973. 
 \bibitem{CH-1958}
    J. W. Cahn, J.E. Hilliard, 
    {\it Free energy of a nonuniform system I. 
    Interfacial free energy}, 
    J. Chem. Phys. {\bf 2} (1958), 258--267. 
 \bibitem{CF-2015}
     P. Colli, T. Fukao, 
     {\it Cahn--Hilliard equation with 
     dynamic boundary conditions 
     and mass constraint on the boundary}, 
     J. Math.\ Anal.\ Appl.\ {\bf 429} (2015) 1190--1213. 
 \bibitem{CF-2016}
    P. Colli, T. Fukao,
    {\it Nonlinear diffusion equations as 
    asymptotic limits of Cahn--Hilliard systems}, 
    J. Differential Equations {\bf 260} (2016), 6930--6959. 
 \bibitem{CV-1990}
     P. Colli, A. Visintin, 
     {\it On a class of doubly nonlinear evolution equations}, 
     Comm.\ Partial Differential Equations {\bf 15} (1990), 
     737--756. 
 \bibitem{D-1977}
    A. Damlamian, 
    {\it Some results on the multi-phase Stefan problem}, 
    Comm. Partial Differential Equations {\bf 2} (1977), 
    1017--1044. 
 \bibitem{B-1983}
    E. DiBenedetto, 
    {\it Continuity of weak solutions to 
    a general porous medium equation}, 
    Indiana Univ. Math. J. {\bf 32} (1983), 83--118. 
 \bibitem{EZ-1986}
    C. M. Elliott, S. Zheng, 
    {\it On the Cahn--Hilliard equation}, 
    Arch. Ration. Mech. Anal. {\bf 96} (1986), 339--357. 
 \bibitem{Fri-1968}
    A. Friedman, 
    {\it The Stefan problem in several space variables}, 
    Trans. Amer. Math. Soc. {\bf 133} (1968), 51--87. 
 \bibitem{F-2016}
    T. Fukao, 
    {\it Convergence of Cahn--Hilliard systems to 
    the Stefan problem with dynamic boundary conditions}, 
    Asymptot. Anal. {\bf 99} (2016), 1--21. 
 \bibitem{FKP-2004}
    T. Fukao, N. Kenmochi, I. Pawlow, 
    {\it Transmission-Stefan problems arising 
    in Czochralski process of crystal growth}, 
    Free boundary problems (Trento, 2002), 151--165, 
    Internat.\ Ser.\ Numer.\ Math., {\bf 147}, 
    Birkh\"auser, Basel, 2004. 
 \bibitem{HK-1991}
    A. Haraux, N. Kenmochi, 
    {\it Asymptotic behaviour of solutions to 
    some degenerate parabolic equations}, 
    Funkcial. Ekvac. {\bf 34} (1991), 19--38. 
 \bibitem{KY-2016}
    S. Kurima, T. Yokota, 
    {\it Monotonicity methods for nonlinear diffusion
equations and their approximations
with error estimates}, preprint.
 \bibitem{M-2010}
    G. Marinoschi, 
    {\it Well-posedness of singular diffusion equations in 
    porous media with homogeneous Neumann boundary conditions}, 
    Nonlinear Anal. {\bf 72} (2010), 3491--3514. 
 \bibitem{O-1983}
     N. Okazawa, 
     {\it An application of the perturbation theorem for 
                                     m-accretive operators},
     Proc.\ Japan Acad.\ Ser.\ A Math.\ Sci.\ {\bf 59} (1983), 
     88--90. 
 \bibitem{RV-2002}
    A. Rodriguez, J. L. V\'azquez, 
    {\it Obstructions to existence in fast-diffusion equations}, 
    J. Differential Equations {\bf 184} (2002), 348--385. 
 \bibitem{S-1997}
    R. E. Showalter, 
    ``Monotone Operators in Banach Space 
    and 
    Nonlinear Partial Differential Equations'',
    Mathematical Surveys and Monographs, {\bf 49}, 
    American Mathematical Society, Providence, RI, 1997.  
 \bibitem{V-2007}
    J. L. V\'azquez, 
    ``The Porous Medium Equation'', 
    Oxford Mathematical Monographs, 
    The Clarendon Press, Oxford University Press, Oxford, 2007. 
 \bibitem{Y-2008}
    H. -M. Yin, 
    {\it On a degenerate parabolic system}, 
    J. Differential Equations {\bf 245} (2008), 722--736. 
 \end{thebibliography}
\end{document}